\newcommand{\cal}{\mathcal}
\newtheorem{lemma1}{}[section]
\newenvironment{lemma}{\begin{lemma1}{\bf Lemma.}}{\end{lemma1}}
\newenvironment{theorem}{\begin{lemma1}{\bf Theorem.}}{\end{lemma1}}
\newenvironment{proposition}{\begin{lemma1}{\bf Proposition.}}{\end{lemma1}}
\newenvironment{corollary}{\begin{lemma1}{\bf Corollary.}}{\end{lemma1}}
\newenvironment{remark}{\begin{lemma1}{\bf Remark.}\rm}{\end{lemma1}}
\newenvironment{definition}{\begin{lemma1}{\bf Definition.}}{\end{lemma1}}
\newenvironment{conjecture}{\begin {lemma1}{\bf Conjecture.}}{\end{lemma1}}
\newenvironment{remarks}{\begin{lemma1}{\bf Remarks.}}{\end{lemma1}}
\newenvironment{remark*}{{\bf Remark.}}{}
\newenvironment{example*}{{\bf Example.}}{}
\newcommand{\Z}{\ensuremath{\mathbb{Z}}}
\newcommand{\C}{\ensuremath{\mathbb{C}}}
\newcommand{\N}{\ensuremath{\mathbb{N}}}
\newcommand{\PP}{\ensuremath{\mathbb{P}}}
\newcommand{\merom}[3]{\ensuremath{#1:#2 \dashrightarrow #3}}
\newcommand{\holom}[3]{\ensuremath{#1:#2  \rightarrow #3}}
\newcommand{\fibre}[2]{\ensuremath{#1^{-1} (#2)}}
\newcommand\sO{{\mathcal O}}
\DeclareMathOperator*{\red}{red}
\newcommand{\chow}[1]{\ensuremath{\mathcal{C}(#1)}}
\newcommand{\upX}{\ensuremath{\tilde{X}}}
\newcommand{\upZ}{\ensuremath{\tilde{Z}}}
\newcommand{\upY}{\ensuremath{\tilde{Y}}}
\newcommand{\upA}{\ensuremath{\tilde{A}}}
\newcommand{\upF}{\ensuremath{\tilde{F}}}
\newcommand{\barX}{\ensuremath{\overline{X}}}
\newcommand{\barZ}{\ensuremath{\overline{Z}}}
\newcommand{\barY}{\ensuremath{\overline{Y}}}
\newcommand{\supp}[0]{\operatorname{Supp}}
\newcommand{\sing}[0]{\operatorname{Sing}}
\newcommand{\stab}[0]{\operatorname{stab}}
\newcommand{\mor}[0]{\operatorname{FinMor}} 
\newcommand{\Alb}[0]{\operatorname{Alb}} 
\newcommand{\uni}[0]{\operatorname{univ}}
\newcommand{\hilb}[0]{\mathcal H}
\newcommand{\univ}[0]{\mathcal U}
\title{Compact K\"ahler manifolds with compactifiable universal cover}
\date{\today}
\subjclass[2000]{32Q30, 14E30, 14J30}
\keywords{universal cover, Iitaka's conjecture}
\author{Beno\^it Claudon}
\author{Andreas H\"oring}
\begin{document}

\begin{abstract} 
Let $X$ be a compact K\"ahler manifold such that the universal cover admits a compactification.
We conjecture that the fundamental group is almost abelian and reduce this problem to a classical
conjecture of Iitaka.
\end{abstract}

\maketitle

%\tableofcontents

\vspace{-1ex}

\section{introduction}

%\subsection{Main result}

The aim of this paper is to study the following problem.

\begin{conjecture} \label{conjecturecover}
Let $X$ be a compact K\"ahler manifold with infinite fundamental group $\pi_1(X)$.
Suppose that the universal cover $\upX_{\uni}$ is a Zariski open subset $\upX_{\uni} \subset \barX$
of some compact complex manifold $\barX$. Then (after finite \'etale cover) there exists a locally trivial fibration
$X \rightarrow A$ with simply connected fibre $F$ onto a complex torus $A$. In particular we have $\upX_{\uni} \simeq F \times \C^{\dim A}$.
\end{conjecture}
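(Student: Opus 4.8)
The plan is to realise the fibration as the Albanese map of a suitable finite \'etale cover, to run an induction on $\dim X$, and to feed the resulting structure of the Albanese map into Iitaka's additivity conjecture $C_{n,m}$. First I would replace $X$ by a finite \'etale cover for which the Albanese map $\alpha \colon X \to A := \Alb(X)$ has connected fibres and image generating $A$; write $q = \dim A$ and let $F$ be a general fibre. Lifting to universal covers yields $\tilde\alpha \colon \upX_{\uni} \to \C^q$, and the hypothesis that $\upX_{\uni}$ is Zariski open in a compact complex manifold $\barX$ is used to control the analytic geometry at infinity: the $1$-forms pulled back from $A$ extend to $\barX$, so that $\C^q$ appears as the natural abelian open part of $\barX$ while the remaining directions are governed by $F$. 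The two things to establish are then that $\alpha$ is a locally trivial fibration and that $F$ is simply connected.

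For the simple connectivity of $F$ I would argue by induction on the dimension. The point is that a general fibre of the Albanese map carries no further abelian data visible on $X$ (the induced map $\Alb(F) \to A$ is trivial), and that the universal cover of $F$ inherits a compactification from $\barX$; hence $F$ is a lower-dimensional instance of the same situation whose Albanese contributes nothing to $A$. Granting the conjecture for $F$, its fundamental group is almost abelian, and the vanishing of the relevant torus quotient forces $\pi_1(F)$ to be finite, hence trivial after one more \'etale cover. The role of Iitaka's conjecture is to make this inductive scheme effective: to promote $\alpha$ to a locally trivial bundle one must control the Kodaira dimension of the fibres and of the base, and subadditivity $\kappa(X) \geq \kappa(F) + \kappa(A) = \kappa(F)$ together with the analysis of its equality case (in the spirit of Ueno's conjecture $K$, known to follow from $C_{n,m}$) is exactly what yields the bundle structure and pins down $\kappa(F)$.

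Finally, once $F$ is simply connected one obtains $\upX_{\uni} \simeq F \times \C^q$ as complex manifolds, and it remains to see that $\pi_1(X) \cong \Z^{2q}$ acts compatibly with the translation action on the $\C^q$ factor, so that $X \to A$ is genuinely locally trivial; since $F$ is simply connected the classifying data of the fibration in $\aut(F)$ is rigid and disappears after passing to a finite \'etale cover. I expect the main obstacle to be precisely the invocation of $C_{n,m}$: translating the topological assertion that the Albanese fibres are simply connected into the numerical additivity inequality, and deriving from it the local triviality of $\alpha$, is the crux on which the entire reduction depends, and it is here that no unconditional argument is available.
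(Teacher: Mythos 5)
The statement you are proving is Conjecture \ref{conjecturecover}, and the paper does not prove it: it is the open problem motivating the whole article. What the paper actually establishes is a pair of partial results -- Theorem \ref{theoremmain}, which analyses a minimal-dimensional counterexample to the almost-abelianity of $\Gamma$ and reduces that question to Iitaka's \emph{uniformisation} conjecture (equivalently, Conjecture {\bf S} of the appendix) for manifolds with generically large fundamental group, and Theorem \ref{local triviality alb}, which proves the geometric conclusion (local triviality of the Albanese map with simply connected fibre) \emph{assuming} $\pi_1(X)$ is almost abelian. So a complete unconditional proof should not be expected, and your closing admission that the crux is conditional is appropriate; but the conditional scheme you propose has gaps that go beyond the invocation of an open conjecture.

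The central one is that you start from the Albanese map, which presupposes exactly what has to be proved. If $\pi_1(X)$ is an arbitrary infinite group (say with finite abelianisation), the Albanese map sees nothing of it, and no amount of information about the Albanese fibres will detect whether $\pi_1(X)$ is almost abelian. The paper's substitute is the $\Gamma$-reduction (Shafarevich map) of Definition \ref{definitiongammareduction}, whose fibres are cut out by the \emph{full} fundamental group; the entire technical apparatus of compactifiable subsets, Bishop compactness of $\chow{\barX}$, and the dichotomy of Theorem \ref{theoremminimalfactorisation} exists to make that meromorphic map holomorphic and locally trivial on a minimal counterexample. Second, your induction step is unjustified: the preimage $\pi^{-1}(F)$ of an Albanese fibre in $\upX_{\uni}$ is a disjoint union of covers of $F$ associated to $\ker(\pi_1(F)\to\pi_1(X))$, not the universal cover of $F$, and even a closed analytic subspace of $\upX_{\uni}$ need not extend to an analytic subspace of $\barX$ -- this failure is precisely why the paper introduces the notion of compactifiable subset (Definition \ref{definitioncompactifiable} and Lemma \ref{lemmacompactifiable}). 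Third, you conflate two different ``Iitaka conjectures'': the paper reduces to Iitaka's conjecture that a manifold uniformised by $\C^{\dim X}$ is an \'etale torus quotient (and, in the projective case of \cite{CHK11}, to abundance), not to the additivity conjecture $C_{n,m}$; subadditivity of Kodaira dimension over a torus base gives $\kappa(X)\geq\kappa(F)$ and does not by itself yield local triviality. For the last step, where you appeal to rigidity of the classifying data in $\aut(F)$, the paper instead uses the compactness of cycle spaces of the K\"ahler compactification together with the topological surjectivity theorem of Koll\'ar--Pardon (Theorem \ref{surjectivity}); that is the argument you would need to make your final paragraph precise.
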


This conjecture generalises Iitaka's classical conjecture claiming that a compact K\"ahler manifold $X$ uniformised by
$\C^{\dim X}$ is an \'etale quotient of a complex torus.
In a recent paper with J. Koll\'ar we studied this conjecture in the algebraic setting, i.e. under the additional 
hypothesis that $X$ and $\upX_{\uni}$ are quasi-projective. 
It turned out that the key issue is to show that the fundamental group is almost abelian and we established 
the following statement.

\begin{proposition}\label{propositionalmostabelian} \cite[Prop.1.3]{CHK11}
Let $X$ have the smallest dimension among all  normal, projective varieties
that have  an infinite,  quasi-projective, \'etale Galois cover $\tilde X\to X$
whose Galois group is not  almost abelian.

Then $X$ is smooth and its canonical bundle $K_X$ is nef but not semiample.
(That is, $(K_X\cdot C)\geq 0$ for every algebraic curve $C\subset X$
but $\sO_X(mK_X)$ is not generated by global sections for any $m>0$.)
\end{proposition}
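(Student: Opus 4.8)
\emph{The overall plan} is to argue by dimension reduction. Given the minimal counterexample $X$, I would exploit that any fibration $f\colon X\to Y$ onto a lower-dimensional normal projective base whose general fibre $F$ has controlled fundamental group forces the Galois group --- or a non-almost-abelian quotient of it --- to appear already on $Y$, contradicting minimality of $\dim X$. Two cases of ``controlled'' suffice: $\pi_1(F)=1$ when $F$ is rationally connected, and $\pi_1(F)$ almost abelian when $K_F\equiv 0$ (by the Beauville--Bogomolov decomposition). In each case the homotopy sequence $\pi_1(F)\to\pi_1(X)\to\pi_1(Y)\to 1$ together with quasi-projectivity of the descended cover does the bookkeeping.

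\emph{Nefness.} First I would replace $X$ by a resolution, which preserves the dimension and $\pi_1$ (resolutions of normal varieties induce $\pi_1$-isomorphisms) and along which the \'etale Galois cover pulls back; so one may assume $X$ smooth. Then I would run the minimal model program. Divisorial contractions and flips are isomorphisms in codimension one between varieties with terminal singularities, hence preserve $\dim X$ and $\pi_1$ and keep us inside the class of minimal counterexamples. Termination must therefore occur at a Mori fibre space $g\colon X'\to Y$ with $\dim Y<\dim X$; its fibres are of Fano type, hence rationally connected and simply connected, so the cover is pulled back from $Y$ and $Y$ becomes a lower-dimensional counterexample. This contradiction shows $K_X$ is nef; the program thus ends at a $\mathbb{Q}$-factorial terminal minimal model, which I keep calling $X$.

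\emph{Smoothness.} Assuming $K_X$ nef and $X$ terminal but singular, I would take a resolution $\mu\colon Y\to X$. As above $Y$ is again a minimal counterexample, so by the previous step $K_Y$ should be nef. But $K_Y=\mu^*K_X+\sum a_iE_i$ with all $a_i>0$ (terminality), and the negativity lemma shows $\sum a_iE_i$ cannot be $\mu$-nef once it is nonzero; hence some $\mu$-contracted curve $C$ has $K_Y\cdot C<0$, contradicting nefness of $K_Y$. Therefore $\mu$ is an isomorphism and $X$ is smooth.

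\emph{Failure of semiampleness.} Finally, supposing $K_X$ semiample, I would form the Iitaka fibration $f\colon X\to Z$, so $\dim Z=\kappa(X)$ and the general fibre $F$ satisfies $K_F\equiv 0$. If $\kappa(X)=0$ then $K_X\equiv 0$ and Beauville--Bogomolov makes a finite \'etale cover of $X$ a product of a torus with simply connected factors, so $\pi_1(X)$ and its Galois quotient are almost abelian --- a contradiction. If $\kappa(X)=\dim X$ then $X$ is of general type and $K_X$ big and nef is automatically semiample; this case is ruled out because an infinite \'etale cover of a projective variety of general type cannot be quasi-projective, contrary to hypothesis. The decisive case is $0<\kappa(X)<\dim X$, where $\dim Z<\dim X$ and the fibres have almost abelian $\pi_1$: here I would need to show that, after a finite \'etale cover, the image of $\pi_1(F)$ in $\pi_1(X)$ is almost abelian and splits off up to finite index, so that the non-almost-abelian Galois quotient is detected on $\pi_1(Z)$ and produces a lower-dimensional counterexample. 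Establishing this splitting --- i.e. that the pluricanonical fibration with numerically trivial fibres becomes essentially isotrivial after a finite cover --- is the main obstacle, and is exactly where the structure theory of families with $K_F\equiv 0$ and the quasi-projectivity of the cover must be combined.
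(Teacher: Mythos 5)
There is a genuine gap, in fact several. First, note that the paper does not reprove this statement; it quotes \cite[Prop.1.3]{CHK11}, but its own proof of Theorem \ref{theoremmain} runs the analogous argument and shows what the intended mechanism is. That mechanism is entirely absent from your proposal: from minimality of $\dim X$ one first deduces that $\tilde X$ contains \emph{no} quasi-projective subset invariant under a finite-index subgroup of $\Gamma$ (normalise an irreducible component, quotient by its stabiliser, and you get a lower-dimensional counterexample). This single lemma is what proves smoothness of $X$ \emph{itself} (the singular locus of $\tilde X$ is such an invariant subset) and what kills birational Mori contractions (the preimage of the exceptional locus is such a subset; the Ionescu--Wi\'sniewski inequality is needed to make it quasi-projective via deforming rational curves). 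Your substitute arguments do not prove the statement as formulated: running the MMP and the negativity lemma only controls a birational model of $X$, not the minimal counterexample $X$ itself, and along the way you assert two false things --- that a resolution of a normal variety induces an isomorphism on $\pi_1$ (false, e.g.\ for the cone over an abelian variety, which is simply connected while its resolution is not), and that divisorial contractions are isomorphisms in codimension one. Even in the fibre-type case, ``the cover is pulled back from $Y$'' requires the dichotomy of \cite[Lemma 2.4]{CHK11} (Lemma \ref{mainlemma} here) to produce a locally trivial $\Gamma$-equivariant fibration $\tilde X\to\tilde Z$ with $\tilde Z$ quasi-projective and a free descended action; the homotopy exact sequence alone does not give this.

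The non-semiampleness step has two further holes. The general-type subcase is dismissed by the assertion that a projective variety of general type admits no infinite quasi-projective \'etale cover; this is not a citable fact and is essentially a nontrivial instance of the very circle of problems the proposition addresses (the actual argument again goes through invariant subsets and the canonical model). More seriously, in the decisive subcase $0<\kappa(X)<\dim X$ the general Iitaka fibre $F$ has $K_F$ torsion, so $\pi_1(F)$ is almost abelian but typically \emph{infinite}; the image of $\pi_1(F)$ in $\Gamma$ need not be finite, so the $\Gamma$-reduction/dichotomy machinery does not apply to the Iitaka fibration directly, and the ``splitting up to finite index'' you would need is precisely the hard content of the torus-fibration analysis in \cite{CHK11}. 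You correctly identify this as the main obstacle, but leaving it open means the proposal does not prove the proposition.
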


By the {\it abundance conjecture} \cite[Sec.2]{Rei87b} the canonical bundle should always
be semiample if it is nef. We then proved
that in the algebraic case Conjecture \ref{conjecturecover} is implied by the abundance conjecture \cite[Thm.1.1]{CHK11}.

Since an infinite cover $\upX \rightarrow X$ is never an algebraic morphism, 
it is natural to look for an analogue of Proposition \ref{propositionalmostabelian} in the analytic category.
Although the existence of a compactification $\upX \subset \barX$
should already be quite restrictive we will see that the appropriate analytic analogue
of the quasiprojectiveness is the existence of a K\"ahler compactification. 

\begin{theorem} \label{theoremmain}
Let $X$ have the smallest dimension among all normal, compact K\"ahler spaces
that have  an infinite, \'etale Galois cover $\tilde X\to X$
whose Galois group $\Gamma$ is not almost abelian and such that there exists a K\"ahler compactification $\upX \subset \barX$.
Then $X$ is smooth, does not admit any Mori contraction, and $\upX$ is not covered by positive-dimensional compact subspaces.

In particular the fundamental group $\pi_1(X)$ is generically large, i.e. $\upX_{\uni}$ is not 
covered by positive-dimensional compact subspaces.
\end{theorem}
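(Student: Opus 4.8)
The plan is to establish the three assertions about the minimal counterexample $X$ one at a time, the recurring device being this: whenever I can produce a normal compact K\"ahler space $Y$ with $\dim Y<\dim X$ that again carries an infinite \'etale Galois cover whose group is not almost abelian and which admits a K\"ahler compactification, minimality of $\dim X$ is contradicted. Smoothness is the one point needing no minimality: since $\bar X$ is a compact K\"ahler manifold and $\tilde X\subset\bar X$ is Zariski open, $\tilde X$ is smooth, and because $\tilde X\to X$ is \'etale, hence a local biholomorphism, $X$ is smooth as well.

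Next I would exclude Mori contractions. Suppose $f\colon X\to Y$ contracts a $K_X$-negative extremal ray, so $-K_X$ is $f$-ample and every positive-dimensional fibre is rationally connected, hence simply connected. If $f$ is of fibre type then $\dim Y<\dim X$, and since the fibres are simply connected, Koll\'ar's theorem on fundamental groups of fibrations with rationally connected fibres gives $f_*\colon\pi_1(X)\xrightarrow{\sim}\pi_1(Y)$; the cover therefore descends to an \'etale Galois cover $\tilde Y\to Y$ with the same group $\Gamma$. The lifted fibration $\tilde X\to\tilde Y$ has compact fibres, so after extending $f$ meromorphically across $\bar X$ and taking a Stein factorisation, $\tilde Y$ inherits a K\"ahler compactification and $Y$ is a smaller counterexample. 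The divisorial and small contractions remain: their exceptional loci are uniruled and lift to positive-dimensional compact subspaces of $\tilde X\subset\bar X$, and ruling these out is, I expect, the hardest step, because $\Gamma$ does not act on $\bar X$ and a bimeromorphic contraction need not preserve the K\"ahler condition.

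To see that $\tilde X$ is not covered by positive-dimensional compact subspaces, I would argue by contradiction and form the $\Gamma$-reduction (the Shafarevich--Campana reduction) $X\dashrightarrow W$, the almost holomorphic map contracting exactly those subvarieties whose image in $\Gamma$ is finite. A covering of $\tilde X$ by compact subspaces means that through a general point of $X$ there is such a subvariety, so $\dim W<\dim X$; the deck transformations permute the contracted subspaces and hence descend, giving an infinite Galois cover $\tilde W\to W$ whose group is $\Gamma$ up to finite index, still not almost abelian. Here the K\"ahler compactification is essential: by Bishop's compactness theorem the compact subspaces of bounded volume in $\bar X$ move in a bounded family, which makes the reduction genuinely meromorphic and lets one compactify $\tilde W$ K\"ahlerly. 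Thus $W$ is a counterexample of smaller dimension, the desired contradiction.

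Finally, the ``in particular'' is formal: the universal cover $\upX_{\uni}$ surjects onto $\tilde X$, and any positive-dimensional compact subspace of $\upX_{\uni}$ maps finitely onto such a subspace of $\tilde X$, so a covering of $\upX_{\uni}$ would descend to a covering of $\tilde X$, against what we have shown. Throughout, the genuine difficulty --- and the only place where the K\"ahler hypothesis enters essentially --- is ensuring that each lower-dimensional quotient again carries a \emph{K\"ahler} compactification of its cover; this rests on Bishop-type compactness of cycle spaces in $\bar X$, and controlling it for the birational contractions is where the real work lies.
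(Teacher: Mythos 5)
Your overall strategy --- minimal counterexample, and whenever a smaller-dimensional quotient with the same properties can be produced, minimality is contradicted --- is indeed the paper's strategy. But there are three genuine gaps, two of which are the actual mathematical content of the proof.

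First, your smoothness argument is wrong. A K\"ahler compactification in this paper (Definition \ref{definitionkaehlercompactifiable}) is only a \emph{normal compact K\"ahler space}, not a manifold, so Zariski-openness of $\upX$ in $\barX$ does not give smoothness of $\upX$. Smoothness is \emph{not} the step that avoids minimality: the paper first shows that a minimal counterexample contains no compactifiable subset invariant under a finite-index subgroup of $\Gamma$ (the normalisations of the components of such a subset, divided by their stabilisers, would be smaller counterexamples), and then observes that $\upX_{\sing}$ is exactly such a subset, being compactified by $\barX_{\sing}$.

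Second, and most seriously, your descent step for the $\Gamma$-reduction is a one-sentence assertion where the paper spends most of its effort. Two things go wrong with ``the deck transformations permute the contracted subspaces and hence descend.'' (a) The $\Gamma$-reduction is a priori only almost holomorphic; making it a genuine locally trivial $\Gamma$-equivariant fibration $\tilde\gamma:\upX\to\tilde W$ requires the dichotomy of Lemma \ref{mainlemma} and Corollary \ref{corollaryfibration}, which in turn rests on the absence of $\Gamma$-invariant compactifiable subsets. (b) Even once $\tilde\gamma$ is locally trivial, the induced $\Gamma$-action on $\tilde W$ can have \emph{finite stabilisers} (coming from automorphisms of the fibre acting on it nontrivially), in which case $\tilde W\to\tilde W/\Gamma$ is not \'etale and you do not get a smaller counterexample. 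Eliminating this possibility is the heart of the proof: the paper first reduces (Theorem \ref{theoremminimalfactorisation}) to a factorisation whose fibre $G$ is either projective or has no positive-dimensional compact subspaces, then splits into cases on $\mathrm{Aut}^0(G)$ --- if it is discrete, Lemma \ref{lemmadiscrete} trivialises the bundle and the orbit of a section gives a forbidden $\Gamma$-invariant compactifiable subset; if it is positive-dimensional, one shows $G$ is a torus, identifies $\varphi$ with a pullback of the Albanese fibration, and only then concludes the action on the base is free. None of this is present in your sketch, and Bishop compactness alone does not supply it.

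Third, you concede that the birational Mori contractions are ``the hardest step'' and leave them open. In the paper this step is actually comparatively easy \emph{because it comes last}: non-uniruledness (established via the MRC fibration as a factorisation of the $\Gamma$-reduction, again using Corollary \ref{corollaryfibration}) forces any Mori contraction to be bimeromorphic, and then the Ionescu--Wi\'sniewski inequality guarantees the exceptional locus is swept out by a large enough family that its $\pi$-preimage is a $\Gamma$-invariant compactifiable subset, which is forbidden. Your ordering (treating fibre-type contractions separately and extending $f$ meromorphically across $\barX$ by hand) both duplicates work and leaves the extension of the lifted fibration to $\barX$ unjustified.
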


Even in the algebraic case, this statement gives some new information:
if $X$ is projective, the absence of Mori contractions implies that $K_X$ is nef.
Thus the ``minimal dimensional counterexample'' 
in Proposition \ref{propositionalmostabelian} has generically large fundamental group.
Note also that for a manifold with generically large $\pi_1$ the Conjecture \ref{conjecturecover}
simply claims that $X$ is an \'etale quotient of a torus. Thus we are reduced 
to Iitaka's conjecture which has been studied by several authors \cite{Nak99, Nak99b, CP08, a16}\footnote{
Apart from \cite{CP08} these papers do not really use that $\upX_{\uni} \simeq \C^{\dim X}$.}.

An important difference between the proof of Theorem \ref{theoremmain} and the arguments in \cite{CHK11} is that 
the natural maps attached to compact K\"ahler manifolds (algebraic reduction, reduction maps for covering families of algebraic cycles)
are in general not morphisms, as opposed to the classification theory of projective manifolds where we have
Mori contractions and, assuming abundance, the Iitaka fibration at our disposal.
Our key observation will be that if $G$ is a general fibre of the $\Gamma$-reduction $\gamma$ (cf. Definition \ref{definitiongammareduction}),
the aforementioned meromorphic maps are holomorphic.
Using this we obtain a strong dichotomy: up to replacing $\gamma$ by some factorisation
the general fibre $G$ is either projective or does not contain any positive-dimensional compact subspaces (cf. Theorem \ref{theoremminimalfactorisation}). 
In a similar spirit F. Campana shows in the Appendix (Theorem \ref{reduction conj S}) from a more general viewpoint that
Iitaka's conjecture has only to be treated for projective manifolds and simple compact K\"ahler manifolds, i.e. 
those which are not covered by positive-dimensional compact submanifolds.

If we try to avoid the K\"ahler assumption on $\barX$ we still obtain some information of birational nature:

\begin{proposition} \label{propositionnonkaehler}
Let $X$ have the smallest dimension among all normal, compact K\"ahler spaces
that have an infinite, \'etale Galois cover $\tilde X\to X$
whose Galois group $\Gamma$ is not almost abelian and such that there exists a compactification $\upX \subset \barX$.
Then $X$ is smooth and special in the sense of Campana \cite{Cam04}.
\end{proposition}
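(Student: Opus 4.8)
The plan is to prove Proposition \ref{propositionnonkaehler}, which asks us to show that the minimal-dimensional counterexample $X$ (now with only a compactification $\upX \subset \barX$, not necessarily K\"ahler) is smooth and special in Campana's sense. I would structure the argument to reuse as much of the machinery behind Theorem \ref{theoremmain} as possible, since the only weakening here is the loss of the K\"ahler hypothesis on $\barX$.

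First I would extract the parts of the proof of Theorem \ref{theoremmain} that do not actually use K\"ahlerness of the compactification. Smoothness of $X$ should follow from the minimal-dimensionality assumption by the same local/birational argument as before: if $X$ were singular, passing to a resolution or to a suitable subvariety would produce a strictly smaller counterexample, contradicting minimality, and this step is essentially algebraic/analytic-local and insensitive to the K\"ahler condition on $\barX$. So the first key step is to isolate the smoothness argument from \cite{CHK11} (or from the proof of Theorem \ref{theoremmain}) and verify it survives in the non-K\"ahler setting.

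The heart of the matter is specialness. Campana's notion of a special manifold is characterised by the non-existence of a surjective meromorphic map with ``orbifold general type'' base, or equivalently by the vanishing of the orbifold-Bogomolov-type obstruction; the relevant machinery here is the core map and the theory developed in \cite{Cam04}. The strategy is to show that if $X$ were \emph{not} special, then its core map $c_X : X \dashrightarrow C(X)$ would have a positive-dimensional base of general type in the orbifold sense, and one would use the functorial/birational-invariance properties of the core together with the $\Gamma$-reduction $\gamma$ (Definition \ref{definitiongammareduction}) to produce either a strictly smaller counterexample or a contradiction with the fact that $\Gamma$ is not almost abelian. Concretely, I would argue that a non-special $X$ admits a fibration onto a base carrying positivity of the canonical (orbifold) bundle, which would force some Mori-type contraction or general-type quotient incompatible with the minimality and the non-almost-abelian condition on the fundamental group.

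The hard part, I expect, will be that the central dichotomy in Theorem \ref{theoremmain} (general fibre of $\gamma$ is either projective or contains no positive-dimensional compact subspace) and the key observation that the relevant meromorphic maps become holomorphic on general $\gamma$-fibres both rely on K\"ahlerness to run Hodge-theoretic and positivity arguments; without it, these tools are unavailable. So the main obstacle is replacing the K\"ahler-dependent positivity input by something weaker that still controls the core map. The natural workaround — and I suspect this is the route the authors take — is to invoke F.~Campana's general result (Theorem \ref{reduction conj S} in the Appendix) to reduce Iitaka's conjecture, via the core/specialness formalism, to projective and simple manifolds, so that specialness of $X$ becomes a statement purely about the structure of the core map that does not require $\barX$ to be K\"ahler. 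In short, I would prove smoothness by minimality as before, and then deduce specialness by combining the birational invariance of Campana's core with the non-almost-abelian hypothesis on $\Gamma$, deferring all positivity inputs to the orbifold general-type theory rather than to K\"ahler Hodge theory.
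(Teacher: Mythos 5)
Your plan correctly identifies the two things to prove and correctly locates the difficulty (the K\"ahler-dependent machinery of Theorem \ref{theoremmain} is unavailable), but the workaround you propose for specialness is not the one that works, and as written it has a genuine gap. The paper's proof of specialness is short and hinges on a single tool you never invoke: the orbifold Kobayashi--Ochiai extension theorem (Theorem \ref{KO}). If $X$ is not special, the core $c_X: X \dashrightarrow C(X)$ is a general type fibration with $\dim C(X)\geq 1$; Theorem \ref{KO} says that the composition $c_X\circ\pi$ on $\upX\subset\barX$ extends to a meromorphic map $\bar c_X:\barX\dashrightarrow C(X)$ --- and this extension theorem requires no K\"ahler hypothesis on $\barX$, which is exactly why it is the right substitute. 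One then takes a general fibre $\bar F$ of (a holomorphic model of) $\bar c_X$ and observes that $\fibre{\pi}{F}=\bar F\cap\upX$ is a $\Gamma$-invariant compactifiable subset of $\upX$, contradicting the consequence of minimality (established as in the proof of Theorem \ref{theoremmain}) that no such subset exists. Your proposed route instead appeals to Theorem \ref{reduction conj S}, which is off-target: that result reduces Conjecture {\bf S} to the primitively special case \emph{assuming} specialness is already in play; it cannot be used to \emph{prove} that $X$ is special. Likewise, ``forcing some Mori-type contraction or general-type quotient'' is not substantiated and, without the extension step, there is no mechanism connecting non-specialness of $X$ to the geometry of $\barX$ or to the group $\Gamma$.

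On smoothness, your instinct (minimality) is right but the specific mechanism you suggest is not: passing to a resolution does not lower the dimension, so it cannot contradict minimality. The actual argument is that $\upX_{\sing}$ is a $\Gamma$-invariant subset that is compactified by $\barX_{\sing}$, hence compactifiable; by the same normalisation-and-quotient argument as in Theorem \ref{theoremmain}, a nonempty such subset would yield a strictly lower-dimensional counterexample. So $\upX_{\sing}=\emptyset$ and $X$ is smooth.
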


This proposition follows rather quickly from an orbifold version of the Kobayashi-Ochiai theorem (Theorem \ref{KO}).
By results of F. Campana and the first named author \cite[Thm.3.33]{Cam04}, \cite[Thm.1.1]{CC11}
the fundamental group of a special manifold of dimension at most three is almost abelian, 
so our counterexample (if it exists) would have $\dim X \geq 4$.

Let us finally note that once we have understood the fundamental group, the geometric statement 
in Conjecture \ref{conjecturecover} is not far away. 

\begin{theorem}\label{local triviality alb}
Let $X$ be a compact K\"ahler manifold whose universal cover $\upX_{\uni}$ 
admits a K\"ahler compactification $\upX_{\uni} \subset \barX$.
If the fundamental group of $X$ is almost abelian, 
the Albanese map of X is (up to finite \'etale cover) a locally analytically trivial fibration whose typical fibre $F$ is simply connected.
\end{theorem}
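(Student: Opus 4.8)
The plan is to reduce everything to a product decomposition of the universal cover. First I would pass to a finite \'etale cover of $X$ so that $\pi_1(X)$ becomes free abelian: since $\pi_1(X)$ is almost abelian and finitely generated, a finite-index subgroup is abelian, hence of the form $\Z^r \oplus T$ with $T$ finite, and the cover attached to the torsion-free part has $\pi_1 = \Z^r$. After this reduction the Albanese map $a := \alb \colon X \to A = \Alb(X)$ induces an isomorphism $a_* \colon \pi_1(X) \xrightarrow{\sim} \pi_1(A)$: for a free abelian fundamental group one has $\pi_1(X) = H_1(X,\Z)$, which is torsion-free, so the Albanese isomorphism $H_1(X,\Z) \xrightarrow{\sim} H_1(A,\Z) = \pi_1(A)$ identifies the two groups and forces $r = 2q$ with $q = \dim A$. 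Lifting to universal covers produces a proper surjective holomorphic map $\tilde a \colon \upX_{\uni} \to \C^q$, equivariant for the deck group $\Gamma = \Z^{2q}$ acting as the translation lattice on $\C^q = \widetilde{A}$; properness of $\tilde a$ follows from that of $a$ since $\C^q \to A$ is a local biholomorphism.

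Next I would analyse the fibres. Over the locus where $a$ is smooth the typical fibre $F$ is connected and compact K\"ahler, and the homotopy exact sequence of the fibration together with $\pi_2(A) = 0$ forces $\pi_1(F) \hookrightarrow \ker a_* = 0$; hence the typical fibre is simply connected. Because $a_*$ is an isomorphism and $\Gamma$ acts freely on the lattice $\pi^{-1}(\mathrm{pt}) \subset \C^q$, each connected component of $a^{-1}(\mathrm{pt})$ lifts \emph{biholomorphically} into $\upX_{\uni}$: the restriction of the universal covering to such a component is the covering of $F$ attached to $\ker(\pi_1(F) \to \pi_1(X)) = \pi_1(F)$, i.e. the universal cover $\widetilde F$, which here equals $F$. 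Thus each fibre of $\tilde a$ sits in $\upX_{\uni} \subset \barX$ as a compact simply connected submanifold biholomorphic to $F$.

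The heart of the argument, and the step where the K\"ahler compactification must enter, is to promote this picture to a global product $\upX_{\uni} \simeq F \times \C^q$. Two things have to be shown: that $\tilde a$ has no singular fibres, so that it is everywhere a proper submersion, and that all of its fibres are mutually biholomorphic. For the first, I would argue that a degeneration of the fibres over some proper analytic subset $Z \subset A$ would produce, after passing to the boundary $\barX \setminus \upX_{\uni}$, positive-dimensional compact analytic subsets forcing the very phenomena excluded by the generic-largeness philosophy of Theorem \ref{theoremmain}; the K\"ahler hypothesis on $\barX$ is what makes the closures of the fibres $\tilde a^{-1}(z)$ behave well and prevents limits from escaping into $\upX_{\uni}$. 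Once $\tilde a$ is a proper submersion with all fibres biholomorphic to the fixed simply connected $F$, the Fischer--Grauert theorem shows it is locally analytically trivial, and since the base $\C^q$ is contractible and Stein this triviality is global: $\upX_{\uni} \simeq F \times \C^q$ with $\tilde a$ the second projection.

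Finally I would descend. The $\Gamma$-action on $\upX_{\uni} = F \times \C^q$ covers the translation action on $\C^q$, so each $\gamma$ acts by $(f,z) \mapsto (\phi_\gamma(z)\cdot f,\, z + \lambda_\gamma)$ with $\phi_\gamma(z) \in \aut(F)$; over a small polydisc in $\C^q$ mapping isomorphically to an open $U \subset A$ the quotient $X = \upX_{\uni}/\Gamma$ is identified with $F \times U$, the transition maps being valued in $\aut(F)$. Hence $a \colon X \to A$ is a locally analytically trivial fibration with simply connected typical fibre $F$, which is the assertion (the finite \'etale cover being the one fixed at the outset). The main obstacle throughout is the penultimate step: ruling out degenerate fibres and establishing \emph{analytic} rather than merely $C^\infty$ local triviality, both of which rely essentially on the existence of the K\"ahler compactification $\upX_{\uni} \subset \barX$.
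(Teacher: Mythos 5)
Your overall skeleton (reduce to $\pi_1(X)\cong\Z^{2q}$, identify $\pi_1(X)$ with $\pi_1(A)$ via the Albanese map, lift to $\tilde a\colon \upX_{\uni}\to\C^q$, show all fibres are copies of a fixed simply connected $F$, conclude local analytic triviality and descend) is compatible with the statement, and the final descent and the Fischer--Grauert step would be fine \emph{if} you had already shown that $\alpha_X$ is equidimensional with all fibres biholomorphic to $F$. But that is precisely the content of the theorem, and your proposal does not prove it. You flag it yourself as ``the heart of the argument'' and then offer only the assertion that a degeneration of fibres would produce ``the very phenomena excluded by the generic-largeness philosophy of Theorem \ref{theoremmain}''. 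This does not work: Theorem \ref{theoremmain} and the whole machinery of $\Gamma$-invariant compactifiable subsets concern a \emph{minimal-dimensional} counterexample whose Galois group is \emph{not} almost abelian. Here $\pi_1(X)$ is almost abelian by hypothesis, there is no minimality to exploit, and nothing in the present hypotheses forbids $\upX_{\uni}$ from containing $\Gamma$-invariant compactifiable subsets (the preimage of the discriminant of $\alpha_X$ would be exactly such a subset, and you cannot rule out its existence by appealing to a theorem whose hypotheses are not met). So the crucial step is missing, not merely sketched.

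The paper closes this gap by a different mechanism, the Koll\'ar--Pardon topological surjectivity theorem (Theorem \ref{surjectivity}): if $Z\to A$ is a map of compact analytic spaces with $\upA_{\uni}$ contractible and the induced cover $\tilde Z$ has the homotopy type of a compact metric space, then $Z\to A$ is surjective. One applies this to the component $\mathcal{Z}_F(X)\subset\chow{F\times X}$ parametrising embeddings of $F$ with fixed homology class: its induced cover is Zariski open in the analogous cycle space of $\barX$, which has compact irreducible components because $\barX$ is K\"ahler (Bishop's theorem), hence has the homotopy type of a finite CW complex; therefore the evaluation map $\mathcal{Z}_F(X)\to A$ is surjective, i.e.\ every fibre of $\alpha_X$ contains an embedded copy of $F$ in the fixed class. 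A second application to the restriction of $\alpha_X$ over the non-equidimensional locus $\Delta$ shows $\Delta=\emptyset$: the induced cover of $\alpha_X^{-1}(\Delta)$ is again compactifiable (Lemma \ref{lemmacompactifiable}), so Theorem \ref{surjectivity} would force $\alpha_X^{-1}(\Delta)\to A$ to be surjective, contradicting generic smoothness of $\alpha_X$. This is where the K\"ahler compactification genuinely enters; to repair your write-up you need to import this argument (or an equivalent substitute) rather than cite Theorem \ref{theoremmain}.
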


Since the proof of the corresponding statement in the algebraic setting \cite[Thm.1.4]{CHK11} relies on strong results of Hodge theory for 
birational morphisms which are unknown in the K\"ahler setting, our argument follows the lines of \cite{KP12}. 
Indeed if the fundamental group $\pi_1(X)$ is generically large, \cite[Thm. 16]{KP12}
implies that $X$ is isomorphic to its Albanese torus (even without any further assumption on $\barX$); 
see \cite{FK12} and Remarks \ref{final rem} for a discussion around this general case.

{\bf Acknowledgements.} This paper is a continuation of our work with J. Koll\'ar whom we thank for many
helpful communications and comments. The authors are supported by the ANR project CLASS.

\section{Notation and basic results}

Manifolds and complex spaces will always be supposed to be irreducible.

If $X$ is a normal complex space we denote by $\chow{X}$ its cycle space \cite{Ba75}.
We will use very often that if $X$ is a compact K\"ahler space, then the irreducible components of $\chow{X}$ are compact (Bishop's theorem, see \cite{Li78}).

Recall that a fibration  \holom{\varphi}{X}{Y} from a manifold $X$ onto a normal complex space $Y$ 
is almost smooth if the reduction $F_{\red}$ of every fibre is smooth and has the expected dimension.
In this case the complex space $Y$ has at most quotient singularities,
the local structure around $y \in Y$ being given by a finite representation of the fundamental group of $\pi_1(F_{\red})$ \cite[Prop.3.7]{Mol88}.
Thus there exists {\em locally} a finite base change $Y' \rightarrow Y$ such that the normalisation $X'$ of $X \times_Y Y'$
is smooth over $Y'$.

\begin{definition} \label{definitionalmostlocallytrivial}
We say that an almost smooth fibration \holom{\varphi}{X}{Y} 
is almost locally trivial with fibre $F$ if for every $y \in Y$ the fibration 
$X' \rightarrow Y'$ constructed above is locally trivial with fibre $F$.
\end{definition}

Note that while an almost locally trivial fibration is locally trivial in the neighbourhood of a generic point $y \in Y$, it is not true
that the reduction $F_{0, \red}$ of every fibre $F_0$ is isomorphic to $F$. For example if $F$ is a K3 surface with a fixed point free involution
$i: F \rightarrow F$ and $\Delta \subset \C$ the unit disc, then
$$
X := (F \times \Delta)/\hspace{-1ex}<i \times (z \mapsto -z)>
$$
has an almost locally trivial fibration $X \rightarrow \Delta$ with fibre $F$ and $F_{0, \red} \simeq F/\hspace{-1ex}<i>$. 

\begin{definition}\label{definitiongammareduction}\cite{Cam94,Kol93}
Let $X$ be a compact K\"ahler manifold and $\Gamma$ a quotient of the fundamental group $\pi_1(X)$.
There exists a unique almost holomorphic fibration\footnote{By unique we mean unique up to bimeromorphic equivalence of fibrations.} 
$$\merom{\gamma}{X}{\Gamma(X)}$$
with the following property: if $Z$ is a subspace with normalisation $Z' \rightarrow Z$ passing through a very general point $x\in X$ such that the natural
map $\pi_1(Z') \rightarrow \pi_1(X) \rightarrow \Gamma$ has finite image, then $Z$ is contained in the fibre through $x$.\\
This fibration is called the $\Gamma$-reduction of $X$ (Shafarevich map in the terminology of \cite{Kol93}).
\end{definition}

By definition the fundamental group $\pi_1(X)$ is generically large if the $\pi_1(X)$-reduction is a birational isomorphism \cite[Defn.1.7]{Kol93} (it corresponds to the case $\gamma d(X)=\dim(X)$ as defined in the Appendix).

\subsection{Compactifiable subsets}

\begin{definition} \label{definitionkaehlercompactifiable}
Let $\tilde X$ be a normal complex space.
We say that $\upX$ admits a K\"ahler compactification if there exists an embedding
$\upX \hookrightarrow \barX$ such that $\barX$ is a normal, compact K\"ahler space and $\upX$ is Zariski open in $\barX$.
\end{definition}

Let $\holom{\pi}{\upX}{X}$ be an infinite \'etale Galois cover with group $\Gamma$
such that $\upX$ admits a compactification $\upX \subset \barX$.
In \cite{CHK11} we assumed that the compactification $\barX$ is a projective variety 
and used the absence of algebraic $\Gamma$-invariant subsets to deduce important restrictions on the geometry of $X$.
While algebraic subsets $Z \subset \upX$ are always Zariski open in some projective subset $\barZ \subset \barX$,  
this is no longer true if we consider analytic subspaces $Z \subset \upX$. We have to restrict our considerations to a smaller class: 

\begin{definition} \label{definitioncompactifiable}
Let $\tilde X$ be a normal complex space 
such that we have a K\"ahler compactification $\upX \hookrightarrow \barX$.
A compactifiable subspace is a subset $Z \subset \tilde X$ such that there exists an analytic subspace $\bar Z \subset \bar X$,
an inclusion $Z \subset \bar Z$ and a subset $Z^* \subset Z$ such that
such that $Z^* \subset \barZ$ is Zariski open. 
A compactifiable subset is a finite union of compactifiable subspaces.
\end{definition}

\begin{remark*} 
In general the compactification $\barZ$ depends on the choice of $\barX$.
\end{remark*}

The following lemma explains why compactifiable subsets are interesting in our context.

\begin{lemma} \label{lemmacompactifiable}
Let $\tilde X$ be a normal complex space 
such that we have a K\"ahler compactification $\upX \hookrightarrow \barX$.
Let $\tilde{\hilb}$ be an irreducible component of $\chow{\upX}$
and $\tilde{\univ}$ be the universal family over $\tilde{\hilb}$.
Let $\holom{\tilde q}{\tilde{\univ}}{\tilde{\hilb}}$ and $\holom{\tilde p}{\tilde{\univ}}{\tilde  X}$ be the natural morphisms.
Then $\tilde  p(\tilde{\univ})$ is a compactifiable subset.
\end{lemma}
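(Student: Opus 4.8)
The plan is to compactify the whole family by passing to the cycle space of $\barX$ and then invoking Bishop's theorem. The starting point is that, since $\upX$ is Zariski open in $\barX$, every cycle of $\upX$ extends to its closure in $\barX$: if $S \subset \upX$ is the support of a cycle, its closure $\barS \subset \barX$ is analytic and satisfies $\barS \cap \upX = S$. Indeed, as $\upX$ is Euclidean open in $\barX$, a point lying both in the closure of $S$ and in $\upX$ already lies in the $\upX$-closure of $S$, which is $S$. Conversely a cycle of $\barX$ no component of which is contained in the boundary $\barX \setminus \upX$ restricts to a cycle of $\upX$. These two operations are mutually inverse and define an open immersion $c \colon \chow{\upX} \hookrightarrow \chow{\barX}$ whose image is the Zariski open set of cycles having no component inside $\barX \setminus \upX$.

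I would then run the following construction. The image $c(\tilde{\hilb})$ is irreducible, so its closure $\overline{\hilb}$ in $\chow{\barX}$ is an irreducible analytic subset and is contained in a single irreducible component of $\chow{\barX}$. Since $\barX$ is compact K\"ahler, that component is compact by Bishop's theorem, hence $\overline{\hilb}$ is compact. Because $c$ is an open immersion, $\tilde{\hilb}_0 := c(\tilde{\hilb})$ is Zariski open in $\overline{\hilb}$, its complement being $\overline{\hilb} \cap \bigl( \chow{\barX} \setminus c(\chow{\upX}) \bigr)$. Let $\bar q \colon \overline{\univ} \to \overline{\hilb}$ be the universal family (restricted over $\overline{\hilb}$), with the natural map $\bar p \colon \overline{\univ} \to \barX$. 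As $\bar q$ is proper and $\overline{\hilb}$ is compact, $\overline{\univ}$ is compact, so $\bar Z := \bar p(\overline{\univ})$ is a compact analytic subset of $\barX$ by Remmert's proper mapping theorem; this $\bar Z$ will be the required compactification.

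It remains to compare $\bar Z$ with $\tilde p(\tilde{\univ})$. Over $\tilde{\hilb}_0 \cong \tilde{\hilb}$ the fibre of $\bar q$ above $c([S])$ is the closure $\barS$, so $\bar p(\bar q^{-1}(\tilde{\hilb}_0)) = \bigcup_{[S] \in \tilde{\hilb}} \barS =: \bar Z_0$, and $\barS \cap \upX = S$ gives $\bar Z_0 \cap \upX = \tilde p(\tilde{\univ})$; in particular $\tilde p(\tilde{\univ}) \subset \bar Z$. Setting $\partial := \overline{\hilb} \setminus \tilde{\hilb}_0$ (a proper analytic subset of $\overline{\hilb}$) and $\bar Z_\partial := \bar p(\bar q^{-1}(\partial))$ (again compact analytic in $\barX$), we have $\bar Z = \bar Z_0 \cup \bar Z_\partial$. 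Then
$$
Z^* := \bar Z \setminus \bigl( \bar Z_\partial \cup (\barX \setminus \upX) \bigr)
$$
is Zariski open in $\bar Z$, because $\bar Z_\partial$ and $\barX \setminus \upX$ are analytic in $\barX$; and $Z^* \subset \bar Z_0 \cap \upX = \tilde p(\tilde{\univ})$. Thus $Z^* \subset \tilde p(\tilde{\univ}) \subset \bar Z$ with $Z^*$ Zariski open in the analytic set $\bar Z$, which is exactly the assertion that $\tilde p(\tilde{\univ})$ is compactifiable.

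The main obstacle is the first paragraph: making rigorous that the closure of cycles defines a holomorphic open immersion of Barlet cycle spaces with the stated image, and that the universal family of $\chow{\barX}$ restricts over $\tilde{\hilb}_0$ to the family of closures $\barS$. Once this compatibility between the cycle spaces of $\upX$ and $\barX$ is in place, the rest is a formal manipulation using Bishop's theorem, Remmert's theorem, and the elementary identity $\barS \cap \upX = S$. One can moreover verify that $Z^*$ is dense in $\bar Z$, so that $\bar Z = \overline{\tilde p(\tilde{\univ})}$, although this is not needed for the statement.
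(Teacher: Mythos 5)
Your argument is essentially the paper's own proof: both compactify $\tilde{\hilb}$ inside $\chow{\barX}$ (compactness coming from Bishop's theorem), push the compactified universal family forward by Remmert's theorem, and obtain $Z^*$ by deleting the image of the part of the family lying over $\overline{\hilb}\setminus\tilde{\hilb}$; your $Z^*$ is literally the same set as the one constructed in the paper. Two small points deserve correction. First, since the points of $\chow{\upX}$ are \emph{compact} cycles of $\upX$, their supports are already closed in $\barX$, so no closure operation is needed; more importantly, the image of $\chow{\upX}\hookrightarrow\chow{\barX}$ is the set of cycles \emph{disjoint from} $D:=\barX\setminus\upX$, not the set of cycles with no component inside $D$ --- a cycle meeting $D$ without having a component in $D$ restricts to a non-compact analytic subset of $\upX$ and hence does not give a point of $\chow{\upX}$, so your two operations are not mutually inverse as stated. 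This does not damage the argument, because all you actually use is that the locus of cycles contained in $\upX$ is Zariski open in $\chow{\barX}$, its complement being the analytic set $\bar q(\fibre{\bar p}{D})$. Second, you should not defer the density of $Z^*$: with $Z^*=\emptyset$ Definition \ref{definitioncompactifiable} is satisfied vacuously, so the content of the lemma is precisely that $\bar Z_\partial$ contains no irreducible component of $\bar Z$; the paper checks this by a dimension count, using that $\partial=\bar q(\fibre{\bar p}{D})$ is a proper analytic subset of the irreducible compact space $\overline{\hilb}$ (being disjoint from $\tilde{\hilb}$), so that $\bar p(\fibre{\bar q}{\partial})$ has too small a dimension to contain a component of $\bar p(\overline{\univ})$.
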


The idea of the proof is quite simple: $\tilde{\hilb}$ admits a natural compactification in $\chow{\barX}$, the corresponding
universal family compactifies $\tilde  p(\tilde{\univ})$. Since we use the statement several times we give the details
of the proof:

\begin{proof}
We set $D:=\barX \setminus \upX$. We have a natural inclusion $\chow{\upX} \hookrightarrow \chow{\barX}$
and we choose an irreducible component $\overline{\hilb}$ that contains the image of $\tilde{\hilb}$.
Denote by  $\overline{\univ}$ the universal family over $\overline{\hilb}$, endowed with the reduced structure,
and by $\holom{\bar q}{\overline{\univ}}{\overline{\hilb}}$ resp. $\holom{\bar p}{\overline{\univ}}{\barX}$ the natural morphisms.
We summarize the construction in a commutative diagram:
\begin{equation} \label{diagramcompactification}
\xymatrix{
\overline{\mathcal U} \ar[rr]^{\bar p}  \ar[dd]^{\bar q}  & & \barX = \upX \sqcup D
\\
& \mathcal{\tilde U} \ar @{_{(}->}[lu]  \ar[r]^{\tilde p}  \ar[d]^{\tilde q} &  \upX \ar @{_{(}->}[u] 
\\
\overline{\mathcal H} & \tilde{\mathcal H} \ar @{_{(}->}[l]  &
}
\end{equation}

The complex space $\barX$ being compact K\"ahler, $\overline{\hilb}$ and $\overline{\univ}$ are compact, hence by Remmert's proper mapping theorem $\bar p(\overline{\univ})$ 
is a finite union of analytic subspaces of $X$. Moreover $\bar q(\fibre{\bar p}{D})$ is a finite union of analytic spaces
and the inclusion $\bar q(\fibre{\bar p}{D}) \subset \overline{\hilb}$ is strict since $\bar q(\fibre{\bar p}{D})$
is disjoint from $\tilde{\hilb}$. Since $\tilde{\hilb}$ is an irreducible component of $\chow{\upX}$ this actually shows
that $\overline{\hilb} = \tilde{\hilb} \sqcup \bar q(\fibre{\bar p}{D})$.
In particular $\overline{\hilb}$ is unique and the Zariski closure of $\tilde{\hilb} \subset \chow{\barX}$.
Note now that $\bar p(\fibre{\bar q}{\bar q(\fibre{\bar p}{D})})$ is a finite union of analytic subspaces
of  $\bar p(\overline{\univ})$  which for reasons of dimension does not contain any irreducible component
of $\bar p(\overline{\univ})$. Thus
$$
Z^*:= \bar p(\overline{\univ})  \setminus \bar p(\fibre{\bar q}{\bar q(\fibre{\bar p}{D})})
$$ 
is dense and Zariski open in $\bar p(\overline{\univ})$, moreover we have an inclusion
$$
Z^* \subset \bar p(\tilde{\univ}) \subset \bar p(\overline{\univ}).
$$
Since $\bar p(\tilde{\univ})= \tilde  p(\tilde{\univ})$ this proves the statement.
\end{proof}

For certain irreducible components of $\chow{\upX}$ we can say more:

\begin{corollary}\label{corollaryexceptionallocus}
In the situation of the lemma above suppose moreover that $\tilde{\univ}$ is irreducible and $\tilde p$ is onto and generically finite.
Set
$$
\tilde B =  \{ \tilde x \in  \upX \ | \ \dim \fibre{\tilde p}{\tilde x} > 0 \}
$$ 
Then $\tilde B$ is a compactifiable subset.
\end{corollary}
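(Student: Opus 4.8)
The plan is to imitate the proof of Lemma \ref{lemmacompactifiable}, but with the universal family replaced by the \emph{degeneration locus} of $\tilde p$ and its compactification. I keep the notation $\overline{\univ},\bar p,\bar q$ and $D=\barX\setminus\upX$ of that lemma. First I would record what the extra hypotheses buy us globally: since $\tilde{\univ}$ is irreducible and $\tilde p$ is onto and generically finite, $\dim\tilde{\univ}=\dim\upX=\dim\barX$, so $\overline{\univ}$ (which contains $\tilde{\univ}$ as a dense Zariski open subset) is irreducible and $\bar p\colon\overline{\univ}\to\barX$ is again onto and generically finite. Writing $E:=\bar q(\fibre{\bar p}{D})$ and $W:=\fibre{\bar q}{E}$, the lemma gives $\overline{\univ}=\tilde{\univ}\sqcup W$ with $\dim W<\dim\overline{\univ}=\dim\barX$, so $\bar p(W)$ is a proper analytic subset of $\barX$.

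Next I would introduce the degeneration locus. By upper semicontinuity of fibre dimension the set $\bar R:=\{\,u\in\overline{\univ}\mid\dim_u\fibre{\bar p}{\bar p(u)}>0\,\}$ is a compact analytic subset of $\overline{\univ}$, and because $\tilde{\univ}$ is open in $\overline{\univ}$ the local fibre dimensions of $\tilde p$ and $\bar p$ agree on $\tilde{\univ}$; hence $\tilde R:=\bar R\cap\tilde{\univ}$ is exactly the locus where $\tilde p$ has positive-dimensional fibres and $\tilde B=\tilde p(\tilde R)$. Since $\tilde{\univ}$ is Zariski open in $\overline{\univ}$, the closure $\overline{\tilde R}$ is the union of those irreducible components of $\bar R$ that meet $\tilde{\univ}$; in particular it is analytic and compact and satisfies $\overline{\tilde R}\cap\tilde{\univ}=\tilde R$. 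By Remmert's theorem $\bar Z:=\bar p(\overline{\tilde R})$ is a compact analytic subset of $\barX$, and as $\bar p$ is proper it is the closure of $\tilde B$; in particular $\tilde B\subset\bar Z$.

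Then I would excise the boundary exactly as in the lemma. Putting $Z^*:=\bar Z\setminus\bar p(W)$, any $x\in Z^*$ is the image of some $u\in\overline{\tilde R}$ with $u\notin W$, whence $u\in\overline{\tilde R}\cap\tilde{\univ}=\tilde R$ and $x=\tilde p(u)\in\tilde B$; thus $Z^*\subset\tilde B\subset\bar Z$ with $Z^*$ Zariski open in $\bar Z$. Decomposing $\bar Z$ into its irreducible components $\bar Z_j$ and setting $Z_j:=\tilde B\cap\bar Z_j$ and $Z_j^*:=\bar Z_j\setminus\bar p(W)$ exhibits $\tilde B=\bigcup_j Z_j$ as a finite union of compactifiable subspaces, which is the desired conclusion.

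The hard part will be density, i.e. showing that $Z^*$ meets every component of $\bar Z$, so that no $\bar Z_j$ of $\overline{\tilde B}$ is entirely swallowed by $\bar p(W)$. The bound $\dim\bar p(W)<\dim\barX$ used in the lemma does not suffice here, because $\bar p$ contracts $\bar R$ and the components $\bar Z_j$ may have small dimension, so the image of the frontier $\overline{\tilde R}\cap W$ could a priori cover a whole $\bar Z_j$. The delicate point is therefore to prove that through a general point of each $\bar Z_j$ no cycle meeting $D$ passes; I expect to obtain this by combining the generic finiteness of $\bar p$ on $\overline{\univ}$ with a genericity argument for the (lower-dimensional) family $E$ of boundary cycles, which is where the hypotheses that $\tilde p$ is generically finite and $\tilde{\univ}$ is irreducible enter in an essential way.
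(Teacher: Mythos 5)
Your reduction of the problem to the density of $Z^*$ in each component of $\bar Z=\overline{\tilde B}$ is accurate, but the step you defer at the end is the entire content of the corollary, and the fix you sketch (a ``genericity argument'' for the boundary family $E$) is not the mechanism that works: as you yourself observe, $\bar p$ contracts $\overline{\tilde R}$, so no dimension count on $W=\fibre{\bar q}{E}$ rules out that $\bar p(V_j\cap W)$ covers all of $\bar Z_j=\bar p(V_j)$ for some component $V_j$ of $\overline{\tilde R}$ --- both sets can have dimension $\dim V_j-1$. The paper's proof avoids the density question altogether by establishing an \emph{exact}, not generic, description of $\tilde B$. It first treats the case where $\tilde p$ is bimeromorphic: there $\bar p$ is proper and bimeromorphic onto the normal space $\barX$, so by Zariski's main theorem every fibre $\fibre{\bar p}{\tilde x}$ is connected. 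For $\tilde x\in\upX\setminus\tilde B$ this fibre is the disjoint union of the finite nonempty set $\fibre{\tilde p}{\tilde x}$ (sent by $\bar q$ into the open stratum $\tilde{\hilb}$) and of $\fibre{\bar p}{\tilde x}\cap W$ (sent by $\bar q$ into the closed complement $\bar q(\fibre{\bar p}{D})$); connectedness forces the second part to be empty and the first to be a single point. Hence $\tilde x$ does not lie in the image $\bar B$ of the exceptional locus of $\bar p$, which yields the equality $\tilde B=\bar B\cap\upX$; this is automatically Zariski open and dense in the union of the components of $\bar B$ meeting $\upX$. That no boundary cycle passes through a point of $\upX$ whose $\tilde p$-fibre is finite is precisely the statement your excision argument cannot reach.

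The second ingredient you are missing is the reduction of the generically finite case to the bimeromorphic one. You work with $\bar p\colon\overline{\univ}\to\barX$ directly, where Zariski's main theorem gives nothing since the fibres of a generically finite map need not be connected. The paper instead takes the Stein factorisation $\bar p_{St}\colon\overline{\univ}\to\barX_{St}$, $\holom{\mu}{\barX_{St}}{\barX}$, notes that $\upX_{St}:=\fibre{\mu}{\upX}$ is Zariski open dense in $\barX_{St}$ and that $\fibre{\mu}{\tilde B}$ is exactly the positive-fibre-dimension locus of $\tilde p_{St}$, applies the bimeromorphic case upstairs, and pushes the resulting compactification down by the finite map $\mu$. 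I would encourage you to redo the argument along these lines; the connectedness step is short once isolated, and it is where the hypotheses that $\barX$ is normal and $\tilde p$ is onto are actually used.
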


\begin{proof}
As in the proof of the preceding lemma we consider the compactification $\tilde{\hilb} \subset \overline{\hilb}$
and the corresponding compactification of universal families $\tilde{\univ} \subset \overline{\univ}$.

{\em 1st case. $\tilde p$ is bimeromorphic.}
The morphism $\bar p$ is onto and bimeromorphic and we denote by $\bar B$ the image of its exceptional locus,
which is of course a finite union of analytic subspaces.
We have an inclusion
$\tilde B \subset \bar B$ and we are done if we show that $\tilde B=\bar B \cap \tilde X$.
To see this take  $\tilde x \in \upX$ such that $\tilde x \not \in \tilde B$, then 
$\bar q(\fibre{\bar p}{\tilde x})$ is the union of $\tilde q(\fibre{\tilde p}{\tilde x})$ and the cycles parametrised
by $\bar q(\fibre{\bar p}{D})$ passing through $\tilde x$. Yet $\tilde q(\fibre{\tilde p}{\tilde x})$ is a singleton
in $\tilde{\mathcal H}$, hence
disjoint from $\bar q(\fibre{\bar p}{D})$. Moreover $\bar q(\fibre{\bar p}{\tilde x})$ is connected by Zariski's main theorem, so
$\fibre{\bar p}{\tilde x}$ is a unique point. This proves the claim.

{\em 2nd case. $\tilde p$ generically finite.}
The morphism $\bar p$ is onto and generically finite, and we denote by $\bar p_{St}: \overline{\univ} \rightarrow \barX_{St}$
and $\holom{\mu}{\barX_{St}}{\barX}$ the Stein factorisation. Note that $\barX_{St}$ contains a Zariski open dense subset
$\upX_{St}:=\fibre{\mu}{\upX}$ and 
$$
\fibre{\mu}{B} = \{ \tilde x \in  \upX_{St} \ | \ \dim \fibre{\tilde p_{St}}{\tilde x} > 0 \}.
$$ 
The first case shows that the right hand side is compactifiable, hence $B$ is compactifiable.
\end{proof}

\subsection{The $\Gamma$-reduction}

Let $Y$ be a normal K\"ahler space 
such that we have a K\"ahler compactification $Y \hookrightarrow \barY$,
and let $U\to V$  be a flat, proper K\"ahler fibration.
In \cite[Section 2]{CHK11} we introduced the moduli spaces 
$\mor(U/V, Y, d)$ of finite  morphisms
$\phi:U_v\to X$ where $U_v$ is a fibre of $U \rightarrow V$ and $d$ is the degree of the graph of $\phi$ with respect to some fixed
K\"ahler forms on $U$ and $Y$.
These spaces of morphisms are Zariski open sets in the relative cycle spaces $\chow{U\times \bar Y/V}$, so we know by
Bishop's theorem that for bounded degree $d$ there are only finitely many irreducible components. Moreover
we have seen in Lemma \ref{lemmacompactifiable} that the images of universal families are compactifiable subsets of $Y$.
We can now argue as in \cite[Lemma 2.4]{CHK11} to prove the following:

\begin{lemma}\label{mainlemma}
Let $X$ be a normal, compact K\"ahler space and $\pi:\tilde X\to X$ 
 an infinite \'etale Galois cover
with group $\Gamma$ such that $\upX$ admits a K\"ahler compactification $\upX \subset \barX$. 
Let $X^0\subset X$ be a dense, Zariski open subset and $g^0:X^0\to Z^0$  a 
flat, proper fibration with general fiber $F$ such that $\pi$
induces a finite covering $\tilde F\to F$.  Let  
$\tilde g^0: \tilde X^0\to \tilde Z^0$
be the corresponding flat, proper fibration with general fiber $\tilde F$.
Then (at least) one of the following holds:
\begin{enumerate}
\item $\tilde g^0$  extends to a locally trivial, $\Gamma$-equivariant fibration
$\tilde g:\tilde X\to \tilde Z$ with fibre $\tilde F$, or
\item $\tilde X$ contains a compactifiable $\Gamma$-invariant subspace
that is disjoint from a general fiber of  $\tilde g^0$.
\end{enumerate}
\end{lemma}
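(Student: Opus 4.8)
The plan is to transport the proof of \cite[Lemma 2.4]{CHK11} to the K\"ahler category, replacing the projective families of cycles used there by the morphism spaces $\mor(U/V, \tilde X, d)$ and invoking Lemma \ref{lemmacompactifiable} and Corollary \ref{corollaryexceptionallocus} in place of the projective-closure arguments. First I would build the family of multisections. Since the general fibre $F$ of $g^0$ is compact and $\tilde F \to F$ is finite, $\tilde F$ is a compact subspace of $\tilde X$; fixing the degree $d$ of the graph of the inclusion $\tilde F \hookrightarrow \tilde X$ with respect to the chosen K\"ahler forms, the finite coverings $\tilde F_z$ (as $z$ ranges over a dense open subset of $Z^0$) sweep out a component $\tilde{\hilb}$ of $\mor(U/V, \tilde X, d)$, sitting inside a component of the relative cycle space $\chow{U\times \barX/V}$. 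Let $\tilde{\univ}$ be the corresponding universal family with projections $\tilde q$ and $\tilde p \colon \tilde{\univ} \to \tilde X$. Because $\tilde g^0$ is the pullback of $g^0$ under the Galois cover, the family of fibres is $\Gamma$-stable, so $\Gamma$ acts on $\tilde{\hilb}$ and $\tilde{\univ}$ compatibly with $\tilde p$ and $\tilde q$ (if $\Gamma$ permutes several components, I replace $\tilde{\hilb}$ by their union). By Lemma \ref{lemmacompactifiable} the image $\tilde p(\tilde{\univ})$ is then a $\Gamma$-invariant compactifiable subset.

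Next I would record the generic behaviour of $\tilde p$. Since $g^0$ is a fibration there is a unique member of the family through a very general point, so $\tilde p$ is bimeromorphic onto its image; and passing to the compactification $\tilde{\hilb} \subset \overline{\hilb}$, Remmert's theorem shows that $\bar p(\overline{\univ})$ is a closed analytic subset of $\barX$ containing the dense set $\tilde X^0$, hence equals $\barX$, so that $\tilde p$ is onto. After replacing $\tilde{\hilb}$ by the irreducible component dominating $\tilde X$, Corollary \ref{corollaryexceptionallocus} applies and shows that the exceptional locus $\tilde B = \{\tilde x \in \tilde X : \dim \tilde p^{-1}(\tilde x) > 0\}$ is a compactifiable subset, which is moreover $\Gamma$-invariant by equivariance.

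The dichotomy now comes from whether $\tilde p$ is an isomorphism. If it is, then $\tilde g := \tilde q \circ \tilde p^{-1} \colon \tilde X \to \tilde{\hilb} =: \tilde Z$ is a $\Gamma$-equivariant holomorphic fibration extending $\tilde g^0$ with general fibre $\tilde F$, and it remains only to upgrade it to a locally trivial fibration: here I would check that $\tilde g$ is almost smooth and then almost locally trivial in the sense of Definition \ref{definitionalmostlocallytrivial}, exploiting that every fibre is a finite covering of the corresponding fibre of the induced fibration downstairs, controlled by $\pi$ via \cite{Mol88}, so that nearby fibres are isomorphic. This gives alternative (1). If instead $\tilde p$ is not an isomorphism, or local triviality fails over a proper locus of $\tilde Z$, I would assemble the obstruction into the required invariant subspace: the exceptional set $\tilde B$, respectively the union of the degenerate fibres, is parametrised by a proper $\Gamma$-invariant subset of $\tilde{\hilb}$, its union is compactifiable by the argument of Lemma \ref{lemmacompactifiable}, and being built from \emph{special} cycles it is disjoint from the general member $\tilde F_z$; this yields alternative (2).

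I expect the main obstacle to be the final step in the good case: proving local triviality, equivalently ruling out degeneration of the fibres $\tilde F$, in the K\"ahler category where the algebraic machinery of \cite{CHK11} is unavailable and one must rely instead on Bishop compactness of the cycle-space components together with the covering structure $\tilde F \to F$. A secondary technical point is to guarantee, in alternative (2), that the invariant compactifiable subspace can genuinely be chosen disjoint from a general fibre of $\tilde g^0$ rather than merely proper; this is what forces one to organise the bad locus as a union of special cycles parametrised by a proper subset of $\tilde{\hilb}$, and to appeal to Corollary \ref{corollaryexceptionallocus} and Lemma \ref{lemmacompactifiable} rather than to $\tilde B$ alone.
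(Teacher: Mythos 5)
Your proposal follows essentially the same route as the paper: the paper's own ``proof'' consists precisely of observing that the morphism spaces $\mor(U/V,\tilde X,d)$ are Zariski open in the relative cycle spaces $\chow{U\times\barX/V}$ (hence, by Bishop's theorem, have only finitely many irreducible components for bounded degree) and that the images of the universal families are compactifiable by Lemma \ref{lemmacompactifiable}, after which it invokes the argument of \cite[Lemma 2.4]{CHK11} verbatim. Your more detailed reconstruction of that argument --- evaluation map onto $\tilde X$, control of its exceptional locus via Corollary \ref{corollaryexceptionallocus}, and the dichotomy between local triviality and a $\Gamma$-invariant compactifiable bad locus disjoint from a general fibre --- is exactly what the paper leaves implicit.
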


Since  $\tilde g:\tilde X\to \tilde Z$
is  $\Gamma$-equivariant, the $\Gamma$-action on $\tilde X$
descends to a $\Gamma$-action on $\tilde Z$.
If $\tilde F$ has no fixed point free automorphisms, the
$\Gamma$-action on $\tilde Z$ is fixed point free,
but in general it can have finite stabilizers. Thus $g^0$ only extends to
a fibration $g:X\to Z$ that is almost locally trivial.

\begin{corollary} \label{corollaryfibration}
Let $X$ be a compact K\"ahler manifold and $\pi:\tilde X\to X$ 
 an infinite \'etale Galois cover
with group $\Gamma$ such that $\upX$ admits a K\"ahler compactification $\upX \subset \barX$. 
Suppose that $\upX$ does not contain any $\Gamma$-invariant compactifiable subsets.

Then the $\Gamma$-reduction is an almost locally trivial (cf. Definition \ref{definitionalmostlocallytrivial}) holomorphic map \holom{\gamma}{X}{\Gamma(X)}
and the corresponding fibration  \holom{\tilde \gamma}{\upX}{\tilde{\Gamma(X)}} is $\Gamma$-equivariant and locally trivial.

If there exists an almost holomorphic map $\varphi_F: F \dashrightarrow W$ with general fibre $G$, 
this map extends to an almost locally trivial holomorphic map \holom{\varphi}{X}{Y}
and the corresponding fibration  \holom{\tilde \varphi}{\upX}{\upY} is $\Gamma$-equivariant and locally trivial.
We call $\varphi$ a factorisation of the $\Gamma$-reduction with fibre $G$.
\end{corollary}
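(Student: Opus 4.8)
The plan is to deduce both assertions from Lemma~\ref{mainlemma} by feeding it the appropriate fibration and, in each case, using the hypothesis that $\upX$ contains no $\Gamma$-invariant compactifiable subsets to exclude alternative~(2). For the first assertion I start from the fact that by Definition~\ref{definitiongammareduction} the $\Gamma$-reduction $\merom{\gamma}{X}{\Gamma(X)}$ is almost holomorphic. Hence there is a dense Zariski open subset $X^0\subset X$ on which $\gamma$ restricts to a morphism, and after shrinking $X^0$ I may assume it is a flat, proper fibration $g^0:X^0\to Z^0$ with general fibre $F$. The key point to verify before applying Lemma~\ref{mainlemma} is that $\pi$ induces a \emph{finite} covering $\tilde F\to F$. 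This is the characteristic feature of the $\Gamma$-reduction: the general fibre $F$ is the maximal subspace through a very general point of $X$ whose fundamental group maps to $\Gamma$ with finite image, so $\im(\pi_1(F)\to\Gamma)$ is finite and $\tilde F=\fibre{\pi}{F}\to F$ is the corresponding finite étale cover.

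I can now apply Lemma~\ref{mainlemma} to $g^0$. Alternative~(2) would produce a compactifiable $\Gamma$-invariant subspace of $\upX$, which is excluded by hypothesis; therefore alternative~(1) holds and $\tilde g^0$ extends to a locally trivial, $\Gamma$-equivariant fibration $\holom{\tilde\gamma}{\upX}{\tilde{\Gamma(X)}}$ with fibre $\tilde F$. By the discussion following Lemma~\ref{mainlemma} the $\Gamma$-action descends to the base with at worst finite stabilisers, so $g^0$ extends to an \emph{almost} locally trivial holomorphic map $g:X\to Z$. Since $g$ agrees with the almost holomorphic map $\gamma$ on the dense open set $X^0$, the map $g$ represents the $\Gamma$-reduction; in particular $\gamma$ admits a holomorphic, almost locally trivial representative, $Z=\Gamma(X)$, and $\upZ=\tilde{\Gamma(X)}$. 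This gives the first statement.

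For the factorisation I repeat the argument with $\varphi_F$ in place of the fibre-wise trivial map. Performing $\varphi_F$ fibrewise over $\gamma$ produces an almost holomorphic map $X\dashrightarrow Y$ with general fibre $G$, which restricts to a flat, proper fibration $X^0\to Y^0$ with general fibre $G$ over a suitable dense Zariski open subset. Since $G$ is a subspace of a general $\Gamma$-reduction fibre $F$, we have $\im(\pi_1(G)\to\Gamma)\subset\im(\pi_1(F)\to\Gamma)$, which is finite, so $\pi$ again induces a finite covering $\tilde G\to G$ and Lemma~\ref{mainlemma} applies. Excluding alternative~(2) as before yields a locally trivial, $\Gamma$-equivariant extension $\holom{\tilde\varphi}{\upX}{\upY}$, which descends to the desired almost locally trivial holomorphic map $\holom{\varphi}{X}{Y}$.

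The step I expect to be the main obstacle is the passage between the meromorphic, almost holomorphic data (namely $\gamma$, and the composite $X\dashrightarrow Y$) and the honest flat, proper fibrations over a Zariski open subset to which Lemma~\ref{mainlemma} applies, together with the subsequent identification of the holomorphic extension $g$ with the $\Gamma$-reduction itself rather than merely a map it dominates. Verifying the finiteness of $\im(\pi_1(F)\to\Gamma)$ rests on the Shafarevich-map theory of \cite{Cam94,Kol93} and should be isolated cleanly. Finally, the descent from a genuinely locally trivial $\Gamma$-equivariant fibration on $\upX$ to an only almost locally trivial map on $X$ is exactly where the finite stabilisers on the base force the weaker conclusion, and matching this with Definition~\ref{definitionalmostlocallytrivial} requires that the base carry at worst quotient singularities.
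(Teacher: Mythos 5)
Your proposal is correct and follows essentially the same route as the paper, which states this corollary without a separate proof precisely because it is the direct application of Lemma~\ref{mainlemma} (alternative (2) being excluded by the hypothesis on $\Gamma$-invariant compactifiable subsets) together with the descent discussion immediately following that lemma. The only point you flag as delicate --- producing the almost holomorphic map $X\dashrightarrow Y$ from $\varphi_F$ --- is handled exactly as you suggest, via the local triviality of $\gamma$ over a general point of the base, so no gap remains.
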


\begin{remark} \label{remarkarrangethings}
Let $G$ be a general $\varphi$-fibre. By definition of the $\Gamma$-reduction the natural
map $\pi_1(G) \rightarrow \pi_1(X) \rightarrow \Gamma$ has finite image $G_\Gamma$, so $\pi$
induces a finite \'etale cover $\tilde G \rightarrow G$ where $\tilde G$ is a general $\tilde \varphi$-fibre.
Up to replacing $X$ by the finite \'etale cover $X' \rightarrow X$ with Galois group $G_\Gamma$
and $\Gamma$ by 
$$
\Gamma' := \mbox{im} (\pi_1(X') \rightarrow \pi_1(X) \rightarrow \Gamma)
$$
we can suppose that $G_\Gamma$ is trivial, hence $\tilde G \simeq G$.
Since $\Gamma' \subset \Gamma$ has finite index, $\Gamma$ is almost abelian if and only if this holds for $\Gamma'$.
\end{remark}

\begin{corollary} \label{corollaryrigid}
In the situation of Corollary \ref{corollaryfibration}, let \holom{\varphi}{X}{Y}
be a factorisation of the $\Gamma$-reduction with fibre $G$.
Then $G$ does not contain any rigid subspaces, i.e. there is no subspace
$Z \subset G$ such that for all $m \in \N$ the Chow space has pure dimension $0$
in the point $[mZ]$.
\end{corollary}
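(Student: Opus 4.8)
The plan is to argue by contradiction. Suppose $G$ contains a rigid subspace $Z$; we may assume $Z$ is irreducible. The idea is to spread $Z$ out over the base of the fibration into a compactifiable multisection, and then to observe that this multisection is an \emph{infinite} \'etale cover of a compact subvariety of $X$ --- which is impossible, since a compactifiable subset has the finite topological type of a Zariski open subset of a compact analytic space. I would first record the geometric setup provided by Corollary~\ref{corollaryfibration} and Remark~\ref{remarkarrangethings}: the factorisation $\varphi\colon X\to Y$ is almost locally trivial, the induced fibration $\tilde\varphi\colon\upX\to\upY$ is $\Gamma$-equivariant and locally trivial with fibre $G$, and $\tilde G\simeq G$. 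Thus $Z$ lifts isomorphically to a cycle $\tilde Z$ contained in a single fibre $\tilde G$ of $\tilde\varphi$, and we obtain cycles $[Z]\in\chow{X}$ and $[\tilde Z]\in\chow{\upX}$; let $\hilb_0\subset\chow X$ and $\tilde{\hilb}_0\subset\chow{\upX}$ be the irreducible components through these points, with universal families $\univ_0$ and $\tilde{\univ}_0$.

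The key step is to show that every deformation of $\tilde Z$ stays inside a fibre of $\tilde\varphi$. Here I would use that \emph{all} multiples $m\tilde Z$ are rigid: if a one-parameter deformation had positive-dimensional image under $\tilde\varphi$, then intersecting the family with a nearby fibre (itself $\simeq G$ by local triviality) would produce a nontrivial deformation of $\tilde Z$, or of some $m\tilde Z$, inside that fibre, contradicting rigidity. Consequently the natural map $\tilde{\hilb}_0\to\upY$ is generically finite, and by (almost) local triviality $Z$ deforms into every neighbouring fibre, so this map is dominant; the same holds for $\hilb_0\to Y$. Therefore $B_0:=p(\univ_0)\subset X$ is a compact analytic multisection of $\varphi$ with $\dim B_0=\dim Y$, while $B:=\tilde p(\tilde{\univ}_0)$ dominates $\upY$; comparing images under the pushforward $\pi_*$ gives $\pi(B)=B_0$, and $B$ is Zariski dense in one connected component $C$ of the \'etale preimage $\pi^{-1}(B_0)$. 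By Lemma~\ref{lemmacompactifiable}, $B$ is compactifiable.

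It remains to compute the monodromy and conclude. The cover $C\to B_0$ is connected Galois with group $H:=\mathrm{im}\bigl(\pi_1(B_0)\to\Gamma\bigr)$. Since $\tilde\varphi$ is $\Gamma$-equivariant with $\upY/\Gamma=Y$ (with finite stabilisers), the covering $\upY\to Y$ realises a surjection $\pi_1^{\mathrm{orb}}(Y)\twoheadrightarrow\Gamma$; as $B_0\to Y$ is generically finite and surjective, the image of $\pi_1(B_0)$ has finite index in $\pi_1^{\mathrm{orb}}(Y)$, so $H$ has finite index in $\Gamma$ and is in particular infinite. Now compactifiability furnishes a subset $Z^*\subset B$ that is Zariski open, hence Zariski dense, in a compact analytic space $\bar Z\subset\barX$ with $\dim\bar Z=\dim B_0$ and $C\subset\bar Z$. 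The holomorphic map $\pi|_{Z^*}\colon Z^*\to B_0$ is dominant between irreducible spaces of equal dimension, hence generically finite; but for a general $b\in B_0$ the fibre $\pi^{-1}(b)\cap C$ is the infinite set $H$ and avoids the lower-dimensional $\bar Z\setminus Z^*$, so $\pi^{-1}(b)\cap Z^*$ is infinite --- a contradiction. Hence no rigid $Z$ exists.

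I expect the main obstacle to be the key step: proving that rigidity of all multiples of $Z$ really forces every deformation to remain vertical, so that $B_0$ is genuinely a multisection rather than a subvariety spreading out in $X$; and, secondarily, turning ``$C$ is an infinite cover of the compact $B_0$'' into a clean contradiction with compactifiability through the fibre count above. The fundamental-group bookkeeping in the monodromy step is then routine once the locally trivial fibration $\upX\to\upY$ and Lemma~\ref{lemmacompactifiable} are in hand.
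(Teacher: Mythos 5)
Your construction of the deformation locus is essentially the paper's: spread the rigid cycle out over $Y$ using the (almost) local triviality of $\varphi$, lift to $\upX$, and invoke Lemma \ref{lemmacompactifiable} to conclude that the locus $B$ swept out by deformations of $\tilde Z$ is compactifiable. (The paper sidesteps your ``key step'' about verticality and the monodromy bookkeeping by taking $\tilde Z=\fibre{\pi}{Z}$, observing that its components are rigid because a nontrivial deformation would push forward to a deformation of some multiple $m[Z]$ --- this is exactly where the hypothesis on \emph{all} multiples is used --- and reading off the component of $\chow{X}$ directly from $\chow{\fibre{\varphi}{U}/U}\simeq U\times\chow{G}$.) Up to that point your argument is sound.

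The endgame, however, contains a genuine error. You claim that $\pi|_{Z^*}\colon Z^*\to B_0$ is generically finite because it is ``dominant between irreducible spaces of equal dimension''. Without properness this is false: $Z^*$ is only Zariski open in the compact space $\bar Z$, the map $\pi$ does not extend to $\bar Z$ (which must meet $\barX\setminus\upX$ precisely when the cover is infinite), and a dominant equidimensional map can have countably infinite fibres. The basic example is $\C=\PP^1\setminus\{\infty\}$ covering an elliptic curve: a compactifiable set that is an infinite \'etale cover of a compact space. Indeed $\upX$ itself is a compactifiable infinite cover of the compact $X$, so no \emph{intrinsic} contradiction of the kind you seek can exist; the existence of a compactifiable infinite cover of a compact subvariety is not absurd per se. What you are missing is the standing hypothesis of Corollary \ref{corollaryfibration}, namely that $\upX$ contains no $\Gamma$-invariant compactifiable subset. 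The correct conclusion is simply that $B$ (more precisely, the $\pi$-preimage of the locus covered by deformations of $Z$, which is a proper analytic subset of $\upX$ since it has dimension $\dim Y+\dim Z<\dim X$) is $\Gamma$-invariant and compactifiable, contradicting that hypothesis directly --- no fibre count is needed, and none would work.
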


\begin{proof}
We argue by contradiction and suppose that such a subspace $Z$ exists. 
For $y \in Y$ general, the fibration $\varphi$ is locally trivial near $y$, i.e. 
there exists an analytic neighbourhood $y \in U \subset Y$ such that
$\fibre{\varphi}{U} \simeq U \times G$. In particular the relative Chow space $\chow{\fibre{\varphi}{U}/U}$ is isomorphic to a product $U \times \chow{G}$,
so there exists a unique irreducible component of $\chow{X}$ parametrising deformations of $Z$ in $X$ 
that dominates $Y$. The reduction of this irreducible component is isomorphic to $Y$.
 
Note now that $\tilde Z:=\fibre{\pi}{Z}$ is a finite union of subspaces in $\tilde G:=\fibre{\pi}{G}$ which are rigid: otherwise
their deformations would induce a deformation of some multiple of the cycle $[Z]$.
Thus the deformations of $\tilde Z$ in $\tilde X$ correspond to an irreducible component of $\chow{\upX}$
whose reduction is isomorphic to $\tilde Y$. By Lemma \ref{lemmacompactifiable} the deformations of $\tilde Z$ 
cover a compactifiable subset of $\tilde X$. Moreover it is $\Gamma$-invariant since it is the $\pi$-preimage of the locus
covered by deformations of $Z$. Thus we have constructed a $\Gamma$-invariant compactifiable subset, a contradiction.
\end{proof}

\begin{remark} \label{remarkbimeromorphic}
If $\mu: G \rightarrow G'$ is a bimeromorphic morphism onto a K\"ahler space, an irreducible component $Z \subset G$ of the $\mu$-exceptional locus
is rigid. Indeed if $Z$ has dimension $d$, then $Z \cdot \mu^* \omega^d= \mu(Z) \cdot \omega^d=0$ where $\omega$ is a K\"ahler form on $G'$.
Thus if $Z'$ is a small deformation of the cycle $[mZ]$, then $0 = Z' \cdot \mu^* \omega^d= \mu(Z') \cdot \omega^d$, so $\mu(Z')$ has dimension strictly smaller than $d$. Thus $Z'$ is contained in the $\mu$-exceptional locus. The complex space $Z$ being an irreducible component of this locus,
we have $\supp(Z)=\supp(Z')$.  
\end{remark}

\begin{corollary} \label{corollaryalgebraicreduction}
In the situation of Corollary \ref{corollaryfibration}, let \holom{\varphi}{X}{Y}
be a factorisation of the $\Gamma$-reduction with fibre $G$.
Then the algebraic reduction $G \dashrightarrow A(G)$ is holomorphic.
\end{corollary}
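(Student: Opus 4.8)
The plan is to prove that the indeterminacy locus of $a : G \dashrightarrow A(G)$ is empty by assembling the fibres of the (relative) algebraic reduction into a covering family of $\upX$ and applying Corollary \ref{corollaryexceptionallocus}. The guiding principle is that two distinct fibres of an algebraic reduction can meet only over its indeterminacy locus; globalised over $\upX$, this locus becomes a $\Gamma$-invariant compactifiable subset, which is excluded by the standing hypothesis.

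First I would invoke Corollary \ref{corollaryfibration}: the fibration $\holom{\tilde\varphi}{\upX}{\upY}$ is $\Gamma$-equivariant and locally trivial with fibre $G$, so each fibre $G_{\tilde y} := \fibre{\tilde\varphi}{\tilde y}$ is biholomorphic to $G$ and carries its own algebraic reduction with target the fixed projective variety $A(G)$. As $\tilde y$ ranges over $\upY$ and the reduction parameter ranges over $A(G)$, the closures of these reduction fibres sweep out an irreducible family of cycles of $\upX$; let $\tilde{\hilb}\subset\chow{\upX}$ be the irreducible component containing it, $\tilde{\univ}$ the universal family, and $\holom{\tilde p}{\tilde{\univ}}{\upX}$, $\holom{\tilde q}{\tilde{\univ}}{\tilde{\hilb}}$ the natural morphisms. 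A dimension count (the base has dimension $\dim\upY+\dim A(G)$, the cycles dimension $\dim G-\dim A(G)$) gives $\dim\tilde{\univ}=\dim\upY+\dim G=\dim\upX$; since exactly one reduction fibre passes through a general point of a general fibre $G_{\tilde y}$, the map $\tilde p$ is onto and bimeromorphic.

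Then I would apply Corollary \ref{corollaryexceptionallocus} to conclude that $\tilde B=\{\tilde x\in\upX\mid \dim\fibre{\tilde p}{\tilde x}>0\}$ is a compactifiable subset. Because the fibres of an algebraic reduction are intrinsic and $\tilde\varphi$ is $\Gamma$-equivariant, the whole construction is $\Gamma$-equivariant and $\tilde B$ is $\Gamma$-invariant. By hypothesis $\upX$ contains no $\Gamma$-invariant compactifiable subset, so $\tilde B=\emptyset$. Hence $\tilde p$ is finite and bimeromorphic onto the normal space $\upX$, so it is an isomorphism by Zariski's main theorem; equivalently, the reduction fibres of each $G_{\tilde y}$ are pairwise disjoint and cover $G_{\tilde y}$. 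The induced map $\tilde q\circ\tilde p^{-1}:\upX\to\tilde{\hilb}$ is therefore holomorphic, and its restriction to a fibre $G_{\tilde y}\simeq G$ is precisely the algebraic reduction; thus $a$ is holomorphic. The degenerate cases $\dim A(G)\in\{0,\dim G\}$ fit the same scheme (when $\dim A(G)=\dim G$ the conclusion is that $a$ is a biholomorphism onto the projective variety $A(G)$, so $G$ itself is projective).

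The main obstacle is the geometric input of the third step, namely that the positive-dimensional-fibre locus $\tilde B$ really measures the failure of holomorphy, i.e. that $\tilde B=\emptyset$ forces $a$ to be a morphism. This is exactly where the local triviality furnished by Corollary \ref{corollaryfibration} is indispensable: it guarantees that two distinct reduction fibres lying in a common fibre $G_{\tilde y}$ can meet only over the indeterminacy locus of the algebraic reduction of $G_{\tilde y}$, so that $\tilde B$ is nothing but the $\Gamma$-saturation of that indeterminacy locus. The remaining points, that the reduction fibres fill out a single irreducible component of $\chow{\upX}$ with irreducible universal family and that $\tilde p$ has degree one, are routine consequences of the irreducibility of $\upY$ and $A(G)$ together with the same local triviality.
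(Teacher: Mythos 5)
Your overall strategy is the same as the paper's: globalise the fibres of the algebraic reduction of $G$ over $\upY$ using the local triviality from Corollary \ref{corollaryfibration}, obtain a bimeromorphic map $\holom{\tilde p}{\tilde{\univ}}{\upX}$ from the universal family over a component $\tilde{\hilb} \subset \chow{\upX}$, kill the locus $\tilde B$ of positive-dimensional $\tilde p$-fibres via Corollary \ref{corollaryexceptionallocus} and the absence of $\Gamma$-invariant compactifiable subsets, and conclude by Zariski's main theorem. However, there is a genuine gap at the very first step, which you dismiss as ``routine'': you take $\tilde{\hilb}$ to be the irreducible component of $\chow{\upX}$ \emph{containing} the family of closures of reduction fibres, and then run a dimension count as if that component coincided with the family. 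Nothing in the irreducibility of $\upY$ and $A(G)$ or in the local triviality guarantees this. If the component is strictly larger, then $\dim \tilde{\univ} > \dim \upX$, the map $\tilde p$ is not bimeromorphic (nor generically finite), and the whole argument collapses; if instead you work only with the closure of the family of reduction fibres, it need not be a component of $\chow{\upX}$, and Lemma \ref{lemmacompactifiable} and Corollary \ref{corollaryexceptionallocus} --- whose proofs use in an essential way that $\tilde{\hilb}$ is an irreducible \emph{component} of $\chow{\upX}$ --- no longer apply.

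This is precisely the point the paper flags in the remark following its proof: for a general meromorphic fibration the fibres may form a proper subset of a Chow component, the standard example being the projection $\merom{g}{\PP^n}{\PP^{n-1}}$ from a point, whose fibres are the lines through that point inside the much larger component of $\mathcal C(\PP^n)$ parametrising all lines. The missing input is Campana's rigidity theorem for fibres of algebraic reductions \cite[Cor.10.1]{Cam88}, which asserts that the general fibres of the algebraic reduction of $G$ do define an entire irreducible component of $\chow{G}$ (this holds more generally for almost holomorphic fibrations). With that citation inserted at the start, your argument becomes essentially the paper's proof; without it, the claim that $\tilde p$ is onto and bimeromorphic is unjustified.
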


\begin{proof}
By a theorem of Campana \cite[Cor.10.1]{Cam88}, 
the general fibres of the algebraic reduction define an irreducible component of $\chow{G}$, i.e. 
there exists an irreducible component $\hilb_G$ of $\chow{G}$
such that the general point corresponds to a general fibre of the algebraic reduction. 
In particular if $\univ_G$ is the universal family over $\hilb_G$, the natural
morphism $\univ_G \rightarrow G$ is onto and bimeromorphic.
Using the local triviality of $\varphi$ as in the proof of Corollary \ref{corollaryrigid} above, we obtain
an irreducible component $\hilb$ of $\chow{X}$ such that the natural map $\holom{p}{\univ}{X}$
is onto and bimeromorphic. If the image $B$ of the $p$-exceptional locus is empty we are obviously done.  

Suppose now that this is not the case. Then we have
an irreducible component $\tilde{\hilb}$ of $\chow{\upX}$ such that the natural map $\holom{\tilde p}{\tilde \univ}{\upX}$
is onto and bimeromorphic, moreover the set 
$$
\tilde B := \fibre{\pi}{B}  =  \{ \tilde x \in  \upX \ | \ \dim \fibre{\tilde p}{\tilde x} > 0 \}
$$ 
is $\Gamma$-invariant and compactifiable by Corollary \ref{corollaryexceptionallocus}. Again a contradiction to our assumption.
\end{proof}

\begin{remark*}
Note that our proof heavily relies on the property that the general fibres of the algebraic reduction 
define an {\em irreducible component} of the Chow space. This holds for any almost holomorphic
fibration, but fails in general: if \merom{g}{\PP^n}{\PP^{n-1}} is the projection from a point $x$, the fibres correspond to lines through $x$,
so they define a proper subset of the irreducible component of $\mathcal C(\PP^n)$ parametrising lines.
\end{remark*}

We can now prove the main statement of this section:

\begin{theorem} \label{theoremminimalfactorisation}
Let $X$ be a compact K\"ahler manifold and $\pi:\tilde X\to X$ 
 an infinite \'etale Galois cover
with group $\Gamma$ such that $\upX$ admits a K\"ahler compactification $\upX \subset \barX$. 
Suppose that $\upX$ does not contain any $\Gamma$-invariant compactifiable subsets.
Let $\varphi:X \rightarrow Y$ be a factorisation of the $\Gamma$-reduction such that the fibre $G$
has minimal, but positive dimension. Then (up to replacing $X$ by a finite \'etale cover) 
the manifold $G$ is either projective or does not contain any positive-dimensional subspaces.
\end{theorem}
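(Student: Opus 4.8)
The plan is to run a case analysis on the algebraic reduction of $G$, using the minimality of $\dim G$ and the absence of $\Gamma$-invariant compactifiable subsets to discard every possibility except the two allowed ones; throughout I pass to a finite \'etale cover of $X$ as in Remark \ref{remarkarrangethings} whenever a refinement of $\varphi$ is produced. First I would invoke Corollary \ref{corollaryalgebraicreduction}: the algebraic reduction $a\colon G\to A(G)$ is holomorphic. If $\dim A(G)=\dim G$ then $G$ is Moishezon and K\"ahler, hence projective, and we are in the first alternative. If $0<\dim A(G)<\dim G$, the general fibre of $a$ is a positive-dimensional proper subspace of $G$; since $a$ is almost holomorphic and canonical it is compatible with the local triviality of $\varphi$ (exactly as in the proof of Corollary \ref{corollaryrigid}), so that the relative algebraic reduction over $W$ supplies an almost holomorphic map on the $\Gamma$-reduction fibre $F$ whose general fibre $G'$ has dimension $\dim G-\dim A(G)$. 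By Corollary \ref{corollaryfibration} this extends to a factorisation of the $\Gamma$-reduction with fibre $G'$, and $0<\dim G'<\dim G$ contradicts the minimality of $\dim G$. Thus we may assume $a(G)=0$, and it remains to prove that a non-projective such $G$ contains no positive-dimensional proper subspace.

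So suppose $a(G)=0$ and let $S\subset G$ be a positive-dimensional proper subspace; I aim for a contradiction. By Corollary \ref{corollaryrigid} the subspace $S$ is not rigid, so some multiple deforms in a positive-dimensional irreducible component $\hilb_G\subset\chow{G}$; write $p\colon\univ_G\to G$ for the universal family and $B:=p(\univ_G)$ for the swept-out locus. If $B\subsetneq G$, then transporting the family to $\upX$ through the local triviality of $\varphi$ and applying Lemma \ref{lemmacompactifiable}, the deformations of $\pi^{-1}(S)$ cover a proper, $\Gamma$-invariant, compactifiable subset of $\upX$ (as in Corollary \ref{corollaryexceptionallocus}), contradicting our hypothesis. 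Hence $B=G$, and $G$ is covered by $\hilb_G$. Consider the associated reduction, i.e. the meromorphic quotient $r\colon G\dashrightarrow R$ of $G$ by chain-equivalence for $\hilb_G$, an almost holomorphic map whose general fibre has dimension at least $\dim S>0$. If $\dim R>0$, then precisely as in the first paragraph $r$ refines $\varphi$ to a factorisation with fibre of dimension strictly between $0$ and $\dim G$, again contradicting minimality.

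The remaining, and genuinely hardest, case is $\dim R=0$, i.e. $G$ is chain-connected by the single irreducible covering family $\hilb_G$. Here the refinement mechanism is unavailable and the swept-out locus is all of $\upX$, so neither minimality nor the hypothesis on $\Gamma$-invariant subsets applies directly. The point I would have to establish is that chain-connectedness by a covering family is incompatible with $a(G)=0$ once $\dim G>0$. The heuristic is that, after first replacing $\hilb_G$ by a covering subfamily of algebraic-reduction fibres of its members so that the general member $S_t$ has $a(S_t)=0$, the triviality of $r$ forces general members to meet in positive dimension; these intersections are positive-dimensional proper subspaces of the $S_t$, which carry no algebraic structure. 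Pushing this down—ultimately to chains of curves, where Campana's criterion forces Moishezon—should contradict $a(G)=0$. Making this reduction precise via Campana's theory of cycle-connectedness (in the spirit of the Appendix) is the main obstacle; once it is in place, $G$ is Moishezon and K\"ahler, hence projective, contradicting the standing assumption and thereby showing that no positive-dimensional proper subspace $S$ can exist. This completes the dichotomy asserted in Theorem \ref{theoremminimalfactorisation}.
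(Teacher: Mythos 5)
Your first two paragraphs follow the paper's strategy (holomorphic algebraic reduction via Corollary \ref{corollaryalgebraicreduction}, refinement of $\varphi$ contradicting minimality, non-rigidity of $S$ via Corollary \ref{corollaryrigid} forcing the deformations of $S$ to cover $G$), and they are essentially sound. The problem is the third paragraph, which you yourself flag as incomplete: the case where the chain-equivalence quotient $R$ of the covering family is a point is precisely the hard case, and the claim you would need --- that chain-connectedness by a covering family is incompatible with $a(G)=0$ for $\dim G>0$ --- is neither proved nor obviously true. Your heuristic does not get off the ground: the general members $S_t$ of a covering family on a manifold with $a(G)=0$ are typically themselves projective (all curves are, for instance), so ``replacing $\hilb_G$ by algebraic-reduction fibres of its members'' may do nothing, and two general members meeting in positive dimension contradicts nothing about $a(G)=0$ by itself. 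As written, the proof has a genuine gap exactly where the theorem's content lies.

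The paper closes this case by a different mechanism that never looks at the chain quotient. It chooses $\hilb_G$ to be a covering family of \emph{maximal} dimension $m$ and proves (Lemma \ref{lemmafinite}) that the universal family map $p_G:\univ_G\rightarrow G$ is then generically finite: if $\fibre{p_G}{g}$ were positive-dimensional, $q_G(\fibre{p_G}{g})$ would be positive-dimensional and Moishezon by Campana's algebraicity theorem \cite[Cor.1]{Cam80}, hence contain a compact curve $C_g$, and $p_G(\fibre{q_G}{C_g})$ would produce a covering family of dimension $m+1$ (here $m<\dim G-1$ because $G$ has no divisors, by \cite{FF79} combined with Corollary \ref{corollaryrigid}). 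The hypothesis on $\Gamma$-invariant compactifiable subsets then upgrades ``generically finite'' to ``finite'' via Corollary \ref{corollaryexceptionallocus}, and purity of the branch locus (again using that $G$ carries no divisors) makes $p_G$ \'etale. After replacing $X$ by the finite \'etale cover $\univ\rightarrow X$ one has $G\simeq\univ_G$, and the projection $q_G:\univ_G\rightarrow\hilb_G$ is itself a genuine fibration with fibres of dimension $m<\dim G$ and base of dimension $\dim G-m>0$, contradicting minimality. In other words, maximality of $m$ forces the members of the family not to overlap, so the ``quotient'' is automatically positive-dimensional --- exactly the conclusion your chain-equivalence approach cannot reach. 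To repair your argument you would need to import Lemma \ref{lemmafinite} and the finite-to-\'etale step; without them the case $\dim R=0$ remains open.
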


\begin{proof}
Suppose that $G$ is not projective, i.e. $a(G)<\dim G$. 

{\em 1st step. Suppose that $a(G)>0$.}
Then by Corollary \ref{corollaryalgebraicreduction} the algebraic reduction $G \dashrightarrow A(G)$
is holomorphic. By Corollary \ref{corollaryfibration} this induces a factorisation of the $\Gamma$-reduction
whose fibres have strictly smaller dimension, a contradiction. 

{\em 2nd step. Suppose that $G$ is covered by positive-dimensional subspaces.}
Note first that 
a compact complex manifold $G$ with $a(G)=0$ contains only finitely many divisors \cite{FF79}.
Thus by Corollary \ref{corollaryrigid} the manifold $G$ contains no divisors since these would be rigid.
Let now $\hilb_G \subset \chow{G}$ be an irreducible component
of the cycle space parametrising a covering family of positive-dimensional subspaces of maximal dimension.
Then by Lemma \ref{lemmafinite} below
the map from the universal family $p_G: \univ_G \rightarrow G$ is generically finite.
Arguing as in the proof of Corollary \ref{corollaryalgebraicreduction} we construct irreducible components
$\hilb \subset \chow{X}$ and $\tilde{\hilb} \subset \chow{\upX}$ such that the restriction to $G$ is $\hilb_G$.

Thus the maps from the universal families $p: \univ \rightarrow X$
and $\tilde p: \tilde{\univ} \rightarrow \upX$ are onto and generically finite, 
hence the set 
$$
\tilde B  =  \{ \tilde x \in  \upX \ | \ \dim \fibre{\tilde p}{\tilde x} > 0 \}
$$ 
is $\Gamma$-invariant and compactifiable by Corollary \ref{corollaryexceptionallocus}. 
By our hypothesis $\tilde B$ is empty, so the map $p_G$ is finite. Since $G$ is smooth and does not contain
any divisors, we see by purity of the branch locus that (up to replacing $\univ_G$ by its normalisation) the map
$p_G$ is \'etale. Hence $p$ and $\tilde p$ are \'etale and  $\tilde{\univ}$ can be compactified
by the universal family over a compactification $\tilde{\hilb} \subset \overline{\hilb}$. Thus up to replacing
$X$ by the finite \'etale cover $\univ \rightarrow X$ we can suppose that $p_G$ is an isomorphism.
Yet then $G \simeq \univ_G$ admits a natural fibration $q_G: \univ_G \rightarrow \hilb_G$,
so we get again a fibration with fibres of strictly smaller dimension, a contradiction. 

{\em 3rd step. $G$ has no positive-dimensional subspaces.}
Let $Z \subset G$ be a positive-dimensional subspace of maximal dimension and take $m \in \N$ arbitrary.
Then the Chow scheme has dimension zero in the point $[mZ]$: indeed if $\hilb_G$ is an irreducible
component passing through $[mZ]$, the map from the universal family $\univ_G \rightarrow G$ is not onto
by  the 2nd step. By maximality of the dimension the image has dimension equal to $\dim Z$, so $\hilb_G$
has dimension zero. Thus $Z$ is rigid, which is excluded by Corollary \ref{corollaryrigid}.
\end{proof}

\begin{lemma} \label{lemmafinite}
Let $G$ be a compact K\"ahler manifold such that $a(G)=0$. Let $\hilb_G \subset \chow{G}$ be an irreducible component
of the cycle space parametrising a covering family of positive-dimensional subspaces which have maximal dimension $m$, i.e.
there is no covering family of subspaces with dimension strictly larger than $m$. 
Let $\univ_G$ be the universal family over $\hilb_G$, and denote by 
$p_G: \univ_G \rightarrow G$ and $q_G: \univ_G \rightarrow \hilb_G$ the natural morphisms.
Then $p_G$ is generically finite.
\end{lemma}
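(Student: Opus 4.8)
The plan is to argue by contradiction: assuming $p_G$ is \emph{not} generically finite, I would manufacture from this either a covering family whose members have dimension strictly larger than $m$ (contradicting the maximality of $m$) or a configuration incompatible with $a(G)=0$.

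First I would record the basic dimension jump. If $p_G$ is not generically finite, then for a general point $x\in G$ the fibre $\fibre{p_G}{x}$ is positive-dimensional; via $q_G$ it is identified with the subfamily $H_x\subset\hilb_G$ of members passing through $x$, so $\dim H_x\geq 1$. Let $D_x=\bigcup_{[Z]\in H_x}Z$ be the swept-out locus. Since the members are distinct irreducible $m$-dimensional cycles and $H_x$ is positive-dimensional, their union cannot lie in finitely many $m$-dimensional components, so $\dim D_x\geq m+1$. These loci fit into one global family: the image $\Sigma\subset G\times G$ of $\univ_G\times_{\hilb_G}\univ_G$ under $(p_G,p_G)$ is a compact analytic set whose fibre over $x$ is exactly $D_x$, so $x\mapsto[D_x]$ defines, via Bishop's theorem, a covering family of $G$ with general member of dimension $\dim D_x$.

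The maximality of $m$ then forces $\dim D_x=\dim G$, i.e. $D_x=G$ for general $x$ (otherwise $\{D_x\}$ is a covering family of proper subspaces of dimension $>m$). I would next iterate, imposing passage through additional general points $x_1,\dots,x_k$, forming $H_{x_1,\dots,x_k}$ and its swept locus: each time this locus is a proper subspace we contradict maximality, and each admissible step drops $\dim H$ by $\dim G-m\geq 1$. The process terminates in the following configuration: a positive-dimensional family $\hilb''\subset\hilb_G$, all of whose members pass through a fixed \emph{general} point $x_1$, with $\dim\hilb''=\dim G-m$ and generically finite surjective evaluation $e:\univ''\to G$.

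It remains to exclude this terminal configuration, and this is where $a(G)=0$ enters and is the main obstacle. The naive argument — that $\fibre{e}{x_1}\cong\hilb''$ is positive-dimensional while $x_1$ is general, contradicting generic finiteness — fails, because $x_1$ is special for the family $\hilb''$ (which was built from it); the example of $\PP^2$ and its lines, where the conclusion genuinely fails and $a=2$, shows that algebraic dimension must be used. Instead I would argue as follows. Since $e$ is dominant and generically finite, $a(\univ'')=a(G)=0$, and since $\univ''\to\hilb''$ is dominant, $a(\hilb'')=0$. Now blow up the common point $x_1$ (resolving the base locus of the family further if necessary); the strict transforms of the members meet the exceptional locus $E$, a projective variety, and because the members are distinct and cover $G$ they must meet $E$ in genuinely varying cycles. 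This produces a nonconstant meromorphic map $\hilb''\dashrightarrow\chow{E}$ to a projective variety, whence $a(\hilb'')>0$, a contradiction. (When $m=\dim G-1$ one concludes at once: the members are divisors, and $a(G)=0$ leaves only finitely many of them by \cite{FF79}, so no positive-dimensional family through $x_1$ can exist.) The crux is thus the terminal step: making precise that after resolving the common base locus the members meet the projective exceptional locus in a nonconstant family, which is what converts the hypothesis $a(G)=0$ into the desired contradiction.
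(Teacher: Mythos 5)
Your overall strategy (contradiction via a larger covering family, with $a(G)=0$ entering only to exclude a terminal ``projective-space-like'' configuration) is sensible, and you correctly identify where the algebraic dimension must be used. But the step you yourself flag as the crux is a genuine gap, and it is exactly the point the paper outsources to a cited theorem. Your claim that, after blowing up the common point $x_1$ (and ``resolving the base locus further if necessary''), the strict transforms of the members meet the exceptional locus in a \emph{nonconstant} family is not justified: distinct members of the family can share the same projectivized tangent cone at $x_1$, and after blowing that up they can again agree to higher order, so one needs an argument that this regress terminates after finitely many steps. Making this precise is essentially the content of Campana's algebraicity theorem \cite[Cor.1]{Cam80}, which the paper invokes: for $g\in G$ general the set $q_G(\fibre{p_G}{g})$ of cycles through $g$ is positive-dimensional and Moishezon. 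Without that input (or a complete proof of your blow-up claim) the argument does not close.

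Moreover, once one has Campana's theorem the whole iteration you set up is unnecessary, and the paper's route is much shorter: since $a(G)=0$ the manifold $G$ has only finitely many divisors \cite{FF79}, so $m<\dim G-1$; since $q_G(\fibre{p_G}{g})$ is Moishezon and positive-dimensional it is covered by compact curves, and for a curve $C_g\subset q_G(\fibre{p_G}{g})$ the swept locus $p_G(\fibre{q_G}{C_g})$ has dimension exactly $m+1<\dim G$, so letting $g$ vary produces a covering family of $(m+1)$-dimensional proper subspaces, contradicting the maximality of $m$ at once. Your use of the full locus $D_x$ (which may be all of $G$) is what forces you into the iteration and the terminal configuration; restricting to a curve's worth of cycles --- which requires knowing such curves exist, i.e.\ precisely the Moishezon statement --- avoids it. The auxiliary points you make (that $a(\hilb'')=0$ follows from generically finite and surjective maps, the dimension drop of $\dim G-m$ per point condition, the immediate conclusion when $m=\dim G-1$) are fine, but they do not repair the missing step.
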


\begin{proof}
Since $G$ contains only finitely many divisors \cite{FF79},
we have $m<\dim G-1$.
We argue by contradiction, and suppose that the general $p_G$-fibre is positive-dimensional.
Then for $g \in G$ general, the analytic set $q_G(\fibre{p_G}{g})$ is positive-dimensional and Moishezon
by \cite[Cor.1]{Cam80}. In particular $q_G(\fibre{p_G}{g})$ is covered by compact curves.
Choose an irreducible curve $C_g \subset q_G(\fibre{p_G}{g})$, then
$p_G(\fibre{q_G}{C_g})$ has dimension $m+1<\dim G$. Since $g \in G$ is general we can construct in this way a covering
family of strictly higher dimension, a contradiction.
\end{proof}

\subsection{Fibre bundles} \label{subsectionfibrebundles}

Let us recall the following facts about the automorphism group of a compact K\"ahler manifold $G$ \cite{F78,Li78}. The identity component 
$\mathrm{Aut}^0(G)$ of the complex Lie group $\mathrm{Aut}(G)$ has a description in terms of the Albanese torus of $G$. 
Consider the natural map
$$\mathrm{Aut}^0(G)\longrightarrow \mathrm{Aut}^0(\Alb(G))\simeq\Alb(G)$$
induced by the Albanese mapping. The kernel of this morphism is a linear algebraic group and the image is a subtorus of $\Alb(G)$; in particular, if $G$ is not covered by rational curves, $\mathrm{Aut}^0(G)$ is a compact group, isogeneous to a subtorus of $\Alb(G)$. If $\mathrm{Aut}^0(G)$
is a point, fibre bundles with fibre $G$ can be easily described.

\begin{lemma} \label{lemmadiscrete} \cite[Cor.4.10]{F78}
Let $\holom{\varphi}{X}{Y}$ be a proper fibre bundle with typical fibre a manifold $G$.
Suppose that the automorphism group $\mbox{Aut}(G)$ is discrete and that the fibration $\varphi$
is K\"ahler, i.e. there exists a two-form $\omega$ on $X$ such that the restriction $\omega|_G$
is a K\"ahler form. Then there exists a finite \'etale base change $Y' \rightarrow Y$
such that $X \times_Y Y' \simeq Y' \times G$.
\end{lemma}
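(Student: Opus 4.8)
The plan is to prove Lemma \ref{lemmadiscrete} by exhibiting the fibre bundle $\varphi$ as classified by a locally constant transition cocycle, using that $\mathrm{Aut}(G)$ is discrete. Since $\varphi$ is a proper fibre bundle with fibre $G$, there is an open cover $\{V_i\}$ of $Y$ together with trivialisations $\fibre{\varphi}{V_i} \simeq V_i \times G$ whose transition functions over $V_i \cap V_j$ take the form $(y,g) \mapsto (y, \psi_{ij}(y)(g))$ for holomorphic maps $\psi_{ij}: V_i \cap V_j \rightarrow \mathrm{Aut}(G)$. The first step is to observe that, because $\mathrm{Aut}(G)$ is discrete as a complex Lie group, each such holomorphic map is \emph{locally constant} on the connected components of $V_i \cap V_j$. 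Hence the bundle is defined by a representation of $\pi_1(Y)$ into $\mathrm{Aut}(G)$, equivalently by a $\mathrm{Aut}(G)$-valued local system; the bundle is thus a \emph{flat} $G$-bundle.

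The second step is to reduce the monodromy to a \emph{finite} group, which is exactly what permits a finite \'etale base change to trivialise. This is where the K\"ahler hypothesis on $\varphi$ enters, and I expect it to be the main obstacle. The image $\Phi := \im(\pi_1(Y) \rightarrow \mathrm{Aut}(G))$ is a subgroup of the discrete group $\mathrm{Aut}(G)$, but a priori it could be infinite; if $\Phi$ is finite the base change $Y' \rightarrow Y$ corresponding to $\ker(\pi_1(Y) \rightarrow \Phi)$ has $X \times_Y Y' \simeq Y' \times G$ and we are done. So the crux is proving $\Phi$ finite. Here I would invoke the K\"ahler form: the two-form $\omega$ restricts to a K\"ahler form on each fibre $G$, and so $\Phi$ preserves the K\"ahler class $[\omega|_G] \in H^2(G,\R)$ (up to the constraint coming from the global two-form). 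The stabiliser in $\mathrm{Aut}(G)$ of a K\"ahler class is a \emph{compact} subgroup by a classical result of Fujiki and Lieberman \cite{F78,Li78}; being simultaneously compact and discrete, it is finite. Therefore $\Phi$ is finite.

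Concretely, I would argue as follows. By the discussion preceding the lemma, $\mathrm{Aut}^0(G)$ relates to $\Alb(G)$, and in any case the full group $\mathrm{Aut}(G)$ acts on the finite-dimensional space $H^2(G,\R)$ preserving the integral lattice $H^2(G,\Z)/\mathrm{tors}$; hence the image of $\mathrm{Aut}(G)$ in $\mathrm{GL}(H^2(G,\Z))$ is discrete, and the subgroup fixing a K\"ahler class lies in a compact orthogonal group for a suitable polarisation-type form, so it is finite. Since $\Phi$ fixes the class $[\omega|_G]$ that is the restriction of the global form, and $\mathrm{Aut}(G)$ is discrete so that the kernel of the action on cohomology meets $\Phi$ in a finite group as well, one concludes $\Phi$ is finite. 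This is essentially the content of \cite[Cor.4.10]{F78}, which I would cite for the precise statement; the role of discreteness is to upgrade ``locally constant transition data'' to an honest representation of $\pi_1(Y)$, and the role of the K\"ahler hypothesis is to force the monodromy group into a compact, hence finite, subgroup.

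Finally, with $\Phi$ finite, let $Y' \rightarrow Y$ be the \'etale cover associated to the normal finite-index subgroup $\ker(\pi_1(Y) \rightarrow \Phi)$. The pulled-back bundle $X \times_Y Y' \rightarrow Y'$ has trivial monodromy, hence is isomorphic to $Y' \times G$, which is the desired conclusion. The main obstacle throughout is genuinely the finiteness of $\Phi$: the discreteness of $\mathrm{Aut}(G)$ alone only gives a flat bundle with possibly infinite monodromy (e.g. an affine bundle construction could a priori have infinite monodromy), and it is precisely the fibrewise K\"ahler structure, via the compactness of the stabiliser of a K\"ahler class, that rules this out.
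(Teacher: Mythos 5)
Your proposal is correct and follows essentially the same route as the paper's sketch: both arguments use the discreteness of $\mathrm{Aut}(G)$ to view the bundle as given by a monodromy representation, invoke the Fujiki--Lieberman theorem to see that the stabiliser of the fibrewise K\"ahler class $[\omega|_G]$ is finite (since it contains $\mathrm{Aut}^0(G)=\{1\}$ with finite index), and then trivialise by passing to the finite \'etale cover killing the monodromy. Your write-up merely spells out in more detail the step that the transition functions are locally constant, which the paper leaves implicit.
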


\begin{proof}[Sketch of proof.] 
The K\"ahler assumption on the morphism implies that the structure group of the fibre bundle can be reduced to $\mathrm{Aut}(F,[\omega|_F])$, the group of automorphisms of $F$ which preserves the cohomology class of $\omega|_F$. By Fujiki-Lieberman 
this group contains $\mathrm{Aut}^0(G)=\{1\}$ as a finite index subgroup and is thus finite. Thus the image of the monodromy presentation
is finite, hence trivial after finite \'etale base change.
\end{proof}

\section{Proofs of the main results}

\begin{proof}[Proof of Theorem \ref{theoremmain}] 
We claim that $\upX$ has no nontrivial, 
compactifiable subset $\tilde W$  invariant under a
finite index subgroup  $\Gamma'\subset \Gamma$. 
Note first that if such  a $\tilde W$ exists, we can suppose it to be analytic: otherwise replace it by
$\upX \cap \overline W$ where $\overline W \subset \barX$ is a compactification.
If we denote by $\tilde W_i$ the irreducible components of $\tilde W$, each of them
is invariant under a
 finite index subgroup  $\Gamma_i\subset \Gamma$.
Taking the  normalization $\tilde W^n_i$, 
we would get a smaller dimensional example
$W^n_i:=\tilde W^n_i/\Gamma_i$ as in Theorem \ref{theoremmain}; a contradiction.

Since $\upX_{\sing} \subset \upX$ is compactifiable and $\Gamma$-invariant,
we conclude that $X$ is smooth. We claim that $X$ is not uniruled, i.e. it is not covered by rational curves: 
otherwise we can consider the MRC-fibration $\merom{\varphi}{X}{Z}$. Since the general $\varphi$-fibre $G$ is rationally connected,
hence simply connected, the MRC-fibration is a factorisation of the $\Gamma$-reduction.
By Corollary \ref{corollaryfibration} it extends to an almost locally trivial holomorphic map  $\holom{\varphi}{X}{Z}$ 
and the corresponding fibration $\holom{\tilde \varphi}{\upX}{\upZ}$ is $\Gamma$-equivariant and locally trivial with fibre $G$.
Note that $G$ does not admit fixed point free actions by any finite group: 
the \'etale quotient
would also be rationally connected, so simply connected.
Therefore the stabilizer $\stab_{\Gamma}(F_z)$ is trivial for
every $\tilde \varphi$-fiber $F_z$.
Hence the $\Gamma$-action  descends to a free $\Gamma$-action on $\tilde Z$;
a contradiction to the minimality of the dimension of $X$.

Arguing by contradiction we will now prove that the $\Gamma$-reduction $\gamma$ is an isomorphism.
If this is not the case we know by Theorem \ref{theoremminimalfactorisation} that 
there exists a factorisation $\varphi: X \rightarrow Y$ of the $\Gamma$-reduction
such that the fibre $G$ is either projective or without positive-dimensional
compact subspaces. Let us consider the corresponding locally trivial fibration
$\tilde \varphi: \upX \rightarrow \upY$ with fibre $G$ (cf. Remark \ref{remarkarrangethings}). 

{\em 1st case. $\mbox{Aut}^0(G)$ is a point.}
The structure group of the fiber bundle  $\tilde \varphi: \upX \rightarrow \upY$ is discrete, so by Lemma \ref{lemmadiscrete}
we can suppose (after finite \'etale cover) that $\upX \simeq \upY \times G$. The $\Gamma$-action
on $\upX$ commutes with the projection on $\upY$ and the group $\mbox{Aut}(G)$ is discrete,
so $\Gamma$ acts diagonally on the product $\upY \times G$.
Consider now the $\tilde \varphi$-section $\upY \times g$ for some $g \in G$: its orbit $\cup_{f \in \Gamma} f(\upY \times g)$ is a finite union
of sections of the form $\upY \times g'$. The complex space $\upY$ has a natural compactification in $\chow{\barX}$, since
it parametrises the $\tilde \varphi$-fibres. Thus we have constructed a $\Gamma$-invariant compactifiable subset of $\upX$, a contradiction
to the minimality of $X$.

{\em 2nd case. $\mbox{Aut}^0(G)$ has positive dimension.}
Since $G$ is not uniruled we know by Section \ref{subsectionfibrebundles} that
the group $\mbox{Aut}^0(G)$ is 
isogeneous to a subtorus of the Albanese torus of $G$, in particular we have
$q(G)>0$ . We claim that in this case $G$ is a torus.
Assuming this for the time being, let us see how to conclude: 
since $\varphi$ is almost locally trivial we can apply \cite[\S 6]{CP00}, \cite[Prop.4.5]{Fuj83}: after a finite \'etale cover $X' \rightarrow X$
we can suppose that $q(X)=q(Y)+\dim G$. 
Since every $\varphi$-fibre is irreducible, the
Albanese map $\alpha_X: X \rightarrow \Alb(X)$ maps
each $\varphi$-fibre isomorphically onto a fibre of the locally trivial fibration $\varphi_*: \Alb(X) \rightarrow \Alb(Y)$.
By the universal property of the fibre product we have a commutative diagram
$$
\xymatrix{
\Alb(X) \times_{\Alb(Y)} Y   \ar[rd]_\psi 
& X  \ar[l] \ar[r]^{\alpha_X} \ar[d]_{\varphi} & \Alb(X) \ar[d]^{\varphi_*}
\\
& Y \ar[r]^{\alpha_Y}  & \Alb(Y)
}
$$
The map $\psi$ is the pull-back of $\varphi_*$ by the fibre product, so it is a locally trivial fibration. The base $Y$ is normal,
so the total space $\Alb(X) \times_{\Alb(Y)} Y$ is normal.
By what precedes the morphism $X \rightarrow \Alb(X) \times_{\Alb(Y)} Y$ is bimeromorphic and finite, hence
an isomorphism by Zariski's main theorem. In particular $\varphi=\psi$ is smooth and locally trivial.
Hence the $\Gamma$-action on $\upX$ descends to a free action on $\upY$, i.e.
$X$ is not a minimal counterexample.

{\em Proof of the claim.}
This is clear if $G$ has no compact positive-dimensional subspaces,
so we can suppose that $G$ is projective. Since $q(G) \neq 0$ 
the Albanese map is non-trivial.
By minimality of the factorisation we see that the Albanese map $G \rightarrow A(G)$ is generically finite
onto its image. Since $G$ admits no birational map by Remark \ref{remarkbimeromorphic},
it is actually finite onto its image.
Moreover $G$ is not of general type since  $\mbox{Aut}^0(G)$ has positive dimension.
If we have $\dim G>\kappa(G)>0$ we know by \cite{Kaw81} that $G$ admits a fibration by positive-dimensional abelian varieties, 
a contradiction to the minimality of the factorisation.
Thus we have $\kappa(G)=0$ and $G$ is an abelian variety. This proves the claim.

Let us finally show that $X$ does not admit any Mori contraction, i.e. does not admit any morphism with connected fibres
\holom{\mu}{X}{X'} onto a normal complex space $X'$ such that $-K_X$ is $\mu$-ample. Since $X$ is not uniruled, $\mu$ would necessarily be bimeromorphic. Moreover $\mu$ is a projective morphism since it is polarised by $-K_X$.
In particular the Ionescu-Wi\'sniewski inequality  \cite[Thm.0.4]{Ion86}, \cite[Thm.1.1]{Wis91} applies and shows
that if $E$ is an irreducible component of the exceptional locus and $F$ a general fibre of $E \rightarrow \mu(E)$, then
one has $\dim E+\dim F \geq \dim X$. Arguing as in the \cite[Lemma 2.5]{CHK11} we can now prove that $\fibre{\pi}{E}$
is a $\Gamma$-invariant compactifiable subset of $\upX$, contradicting
the minimality of $X$.
\end{proof}

The proof of Proposition \ref{propositionnonkaehler} relies on 
the following generalisation of the Kobayashi-Ochiai theorem to fibrations of general type.

\begin{theorem} \cite[Thm.2]{KO75}\cite[Thm.8.2]{Cam04}\label{KO}
Let $X$ be a compact K\"ahler manifold, $\barX$ be a complex manifold, and let $B \subset \barX$ be a proper closed analytic subset. Let \merom{\pi}{\barX \setminus B}{X} be a nondegenerate meromorphic map, \emph{i.e.} such that the tangent map
$T_{\barX \setminus B} \rightarrow T_X$  is surjective at least one point $v \in \barX \setminus B$. 
Let us finally consider \merom{g}{X}{Y} a general type fibration\footnote{We refer to \cite{Cam04} for the basic definitions
of the orbifold theory.} defined on $X$. 
Then $f=g\circ \pi$ extends to a meromorphic map $\barX \dashrightarrow Y$.
\end{theorem}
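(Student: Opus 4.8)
The plan is to run the Kobayashi--Ochiai extension argument with the orbifold base of the fibration $g$ playing the role of the general type target. First I would normalise the geometry: after resolving the indeterminacy locus of $\pi$ (enlarging the analytic set $B$ to absorb it) and passing to a neat holomorphic model of $g$, one may assume that $\pi$ is holomorphic on $\barX \setminus B$ and that $g$ is an honest morphism with a well-defined orbifold base $(Y, \Delta)$, where $\Delta = \sum_D (1 - 1/m_D)\, D$ records the multiple fibres of $g$. The object to extend is then the holomorphic map $\holom{f}{\barX \setminus B}{Y}$, $f = g \circ \pi$; the nondegeneracy of $\pi$, together with the fact that $g$ is a submersion over a dense open subset of $Y$, ensures that $f$ has maximal rank $\dim Y$ at some point, i.e. $f$ is nondegenerate.

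The hypothesis that $g$ is a \emph{general type} fibration means precisely that $K_Y + \Delta$ is big, and I would exploit this in two ways. On one hand, bigness of $K_Y + \Delta$ makes the orbifold $(Y, \Delta)$ measure hyperbolic: its orbifold Kobayashi--Eisenman pseudo-volume form $\Psi_{(Y,\Delta)}$ is strictly positive on a dense Zariski open subset, and the associated orbifold $m$-canonical map is generically finite onto its image. On the other hand, the defining property of the orbifold base is that the multiplicities of $\Delta$ are exactly accounted for by the multiple fibres of $g$, so that $g$, and hence the composite $f$ (the source $\barX \setminus B$ carrying the trivial orbifold structure), is an orbifold morphism to $(Y, \Delta)$.

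With $f$ an orbifold morphism of maximal rank into the measure hyperbolic orbifold $(Y, \Delta)$, the Eisenman volume-decreasing property bounds the pullback $f^\ast \Psi_{(Y,\Delta)}$ by the intrinsic Kobayashi--Eisenman measure of $\barX \setminus B$. I would then localise at a smooth point of $B$, where $\barX \setminus B$ looks like $\D^\ast \times \D^{n-1}$ with $n = \dim \barX$, and use the comparison of this intrinsic measure with the flat one to show that $f$ has at worst a tame (finite-order) singularity transverse to $B$. Combined with the generic finiteness of the orbifold pluricanonical map, this forces the graph of $f$ to have locally finite volume near $B$, and Bishop's theorem (or, equivalently, Siu's extension theorem) then closes up the graph to an analytic set, yielding the meromorphic extension $\merom{f}{\barX}{Y}$.

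The main obstacle is the orbifold boundary analysis concentrated in the last two steps. Two points require genuine care. First, one must verify that the factorisation $f = g \circ \pi$ really does make $f$ an orbifold morphism into $(Y, \Delta)$: this is exactly where it is essential that $\pi$ take values in $X$ and that $\Delta$ record precisely the fibre multiplicities of $g$, so that the poles of $\Psi_{(Y,\Delta)}$ along $\Delta$ are cancelled upon pulling back through $X$. Second, one must establish measure hyperbolicity of $(Y,\Delta)$ from bigness of $K_Y + \Delta$ and control the intrinsic measure of $\barX \setminus B$ transverse to $B$. These are precisely the ingredients furnished by Campana's orbifold refinement of Kobayashi--Ochiai; once they are in place, the extension follows by the same measure-theoretic mechanism as in the classical case.
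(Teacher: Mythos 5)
The paper does not prove this statement: it is imported verbatim from \cite[Thm.2]{KO75} and \cite[Thm.8.2]{Cam04} and used as a black box in the proof of Proposition \ref{propositionnonkaehler}, so there is no internal proof to compare against. Measured against the cited sources, your sketch is a faithful reconstruction of the Kobayashi--Ochiai mechanism in Campana's orbifold formulation: big $K_Y+\Delta$ produces a degenerate pseudo-volume form (equivalently, a generically finite orbifold pluricanonical map), the fibre multiplicities recorded in $\Delta$ are exactly what makes the pullback through $X$ of the orbifold pluricanonical data holomorphic, the Ahlfors--Schwarz volume inequality on the local model $\D^{\ast}\times\D^{n-1}$ bounds the volume of the graph of $f$ near $B$, and Bishop's theorem closes the graph up to an analytic set. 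You have also correctly isolated the two genuinely delicate points (the orbifold-morphism property of $g\circ\pi$ and the transverse volume estimate), which are precisely the content of Campana's refinement; note only that nondegeneracy of $\pi$ at a single point suffices because the Schwarz inequality for the Ricci-negative pseudo-volume form is a pointwise inequality valid on all of $\barX\setminus B$, so the finite-volume bound is not merely generic.
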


\begin{proof}[Proof of Proposition \ref{propositionnonkaehler}]
As in the proof of Theorem \ref{theoremmain} the minimality condition implies that $\upX$ does not contain any
$\Gamma$-invariant compactifiable subset.
Since the singular locus $\upX_{\sing}$ is $\Gamma$-invariant  and naturally compactified by $\barX_{\sing}$, 
we see that $X$ is smooth. 

Let us now argue by contradiction and suppose that $X$ is not special.
Then the core fibration \merom{c_X}{X}{C(X)} \cite[section 3]{Cam04} is not trivial, i.e. the base has dimension at least one. 
Moreover it is a general type fibration, so by Theorem \ref{KO} above the composed map $\merom{c_X \circ \pi}{\upX}{C(X)}$
extends to a meromorphic map $\merom{\bar c_X}{\barX}{C(X)}$.
Up to replacing $\barX$ by a suitable bimeromorphic model, we can assume that $\bar c_X$ is holomorphic.
Since $\barX$ is proper, a general $\bar c_X$-fibre $\bar F:=\fibre{\bar c_X}{y}$ has finitely many irreducible components, each of them of dimension $\dim X-\dim C(X)$ and not contained in $\barX \setminus \upX$. 
Thus if $F:=\fibre{c_X}{y}$ is the corresponding $c_X$-fibre, then $\fibre{\pi}{F}=\bar F \cap \upX$.
Thus $\fibre{\pi}{F}$ is a $\Gamma$-invariant compactifiable subset of $\upX$, a contradiction.
\end{proof}

As mentioned in the introduction, the proof given in \cite{CHK11} of the local triviality of the Albanese map does not apply verbatim in this non algebraic setting. We need the following purely topological result.
\begin{theorem}\label{surjectivity} \cite[Thm. 14]{KP12}
Let $\holom{f}{Z}{A}$ be a map between compact analytic spaces, the universal cover $\upA_{univ}$ being contractible. Let us denote by $\tilde{Z}$ the induced cover of $Z$. If $\tilde{Z}$ has the homotopy type of a compact metric space, then $f$ is surjective.
\end{theorem}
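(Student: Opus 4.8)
The plan is to pass to universal covers and reduce the statement to surjectivity of the induced map on covers, where compactness of $Z$ supplies the one robust geometric input. Write $G:=\pi_1(A)$ and let \holom{p}{\upA_{univ}}{A} be the universal covering; by hypothesis $\upA_{univ}$ is contractible, so $A$ is a $K(G,1)$ and $\upA_{univ}$ is a model for $EG$. By definition $\tilde Z=Z\times_A\upA_{univ}$, and I denote by \holom{q}{\tilde Z}{\upA_{univ}} the second projection. Since $p$ is surjective one has $q(\tilde Z)=p^{-1}(f(Z))$, so $f$ is surjective if and only if $q$ is surjective, and this is what I would prove. The first key observation is that $q$ is proper: for compact $K\subset\upA_{univ}$ the preimage $q^{-1}(K)$ is a closed subset of $Z\times K$, which is compact because $Z$ is. In particular $C:=q(\tilde Z)=p^{-1}(f(Z))$ is a closed, $G$-invariant subset of $\upA_{univ}$.

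Now I would argue by contradiction, assuming $f$, hence $q$, is not surjective. Pick $a_0\in A\setminus f(Z)$ and a lift $\tilde a_0\in\upA_{univ}$; by $G$-invariance the whole orbit $G\cdot\tilde a_0=p^{-1}(a_0)$ is disjoint from $C$. The idea is to detect this missing orbit cohomologically. Because $\upA_{univ}$ is contractible and $G$ acts freely and cocompactly, compactly supported cohomology of the covers is computed by group-ring coefficients: $H^*_c(\upA_{univ})\cong H^*(A;\Z G)=H^*(G;\Z G)$ and $H^*_c(\tilde Z)\cong H^*(Z;\Z G)$, and the pullback $q^*\colon H^*_c(\upA_{univ})\to H^*_c(\tilde Z)$ attached to the proper map $q$ corresponds to $f^*$ on these groups. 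On the one hand, factoring $q$ through the closed embedding $C\hookrightarrow\upA_{univ}$ and invoking the long exact sequence of the open complement $V:=\upA_{univ}\setminus C\supset G\cdot\tilde a_0$, the classes carried by the punctures of $V$ force the restriction $H^*_c(\upA_{univ})\to H^*_c(C)$ to vanish in the relevant degree (for instance, when $G$ is a Poincar\'e duality group, $H^*(G;\Z G)$ is concentrated in the top degree $\dim A$, and there $H^{\dim A}_c(V)\to H^{\dim A}_c(\upA_{univ})$ is onto). Hence $q^*=0$ in that degree.

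On the other hand, the hypothesis that $\tilde Z$ has the homotopy type of a compact metric space is exactly what I would use to certify that the corresponding class does \emph{not} die on $\tilde Z$: it guarantees finite cohomological dimension and finitely generated cohomology, so the group-ring class detecting $f$ survives in $H^*(Z;\Z G)\cong H^*_c(\tilde Z)$. The clash between the two computations yields the contradiction. The main obstacle is precisely this last bridge: properness and compactly supported cohomology are \emph{not} homotopy invariants, whereas the standing hypothesis controls only the ordinary homotopy type of $\tilde Z$; the heart of the matter is to show that a proper, $G$-invariant, cocompact closed subset $C\subsetneq EG$ can never be the image of a homotopy-finite space under a proper map (equivalently, that deleting a full $G$-orbit of balls from $EG$ produces infinitely generated cohomology that a homotopy-finite $\tilde Z$ cannot account for). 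A secondary technical point is that $A$ may be singular, so in place of smooth Poincar\'e duality one works with Borel--Moore homology of the canonically oriented analytic pseudomanifolds, or reduces to the smooth case after resolution.
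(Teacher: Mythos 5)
First, a remark on the comparison itself: the paper does not prove this statement. It is imported verbatim from \cite[Thm.~14]{KP12} and used as a black box in the proof of Theorem \ref{local triviality alb}, so there is no internal argument to measure your attempt against; what you would have to reproduce is Kollár and Pardon's proof. Your opening reductions are correct and are the natural first moves: $\tilde Z=Z\times_A\tilde A_{\mathrm{univ}}$ is closed in $Z\times\tilde A_{\mathrm{univ}}$, hence $q$ is proper, $q(\tilde Z)=p^{-1}(f(Z))=:C$ is closed and $G$-invariant, and surjectivity of $f$ is equivalent to surjectivity of $q$.

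The second half, however, contains a genuine gap that you partly acknowledge but do not close, and it is fatal to the scheme as designed. Your contradiction requires two inputs: (a) the restriction $H^*_c(\tilde A_{\mathrm{univ}})\to H^*_c(C)$ vanishes in some degree, and (b) $q^*$ is \emph{nonzero} in that same degree. For (a) you only treat the case where $G$ is a Poincaré duality group; to be fair, this covers the situation in which the theorem is applied in this paper ($A$ a torus, $\tilde A_{\mathrm{univ}}\simeq\R^{2q}$, where the excision argument does show that $H^{2q}_c(V)\to H^{2q}_c(\tilde A_{\mathrm{univ}})$ is onto), but the theorem is stated for arbitrary compact analytic $A$, possibly singular, and "the relevant degree" is then undefined. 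The real problem is (b): compactly supported cohomology is not a homotopy invariant, so the hypothesis that $\tilde Z$ has the homotopy type of a compact metric space gives no control whatsoever on $H^*_c(\tilde Z)$ or on $q^*$, and nothing in your argument establishes that $q^*\neq 0$. (Note for instance that when $\dim Z<\dim A$ both sides of (b) vanish for degree reasons, so the scheme detects nothing, although the theorem still has content there: $Z$ a point in a torus has $\tilde Z$ an infinite discrete set, which is indeed not homotopy equivalent to a compact metric space.) You then restate the missing step as the claim that a proper, cocompact, $G$-invariant closed subset $C\subsetneq \tilde A_{\mathrm{univ}}$ can never be the proper image of a homotopy-finite space — but that is essentially the theorem itself, so the argument is circular at its core. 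A further small inaccuracy: homotopy equivalence to a compact metric space does not imply finitely generated cohomology (Hawaiian-earring-type examples), so that clause cannot be used either. To repair this one needs a mechanism that converts the purely homotopy-theoretic finiteness of $\tilde Z$ into a constraint on the proper $G$-map $q$; this is exactly the content of Pardon's argument in \cite{KP12}, to which you should refer.
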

With this in mind, we can prove Theorem \ref{local triviality alb} in the spirit of \cite{KP12}.

\begin{proof}[Proof of Theorem \ref{local triviality alb}]
To begin with, let us recall that classical arguments (see \cite{CHK11}) show that the Albanese map is always a fibration ($i.e.$ a surjective map with connected fibres) when $\upX_{univ}$ is a Zariski open subset of a compact complex manifold. Since $\pi_1(X)$ is supposed to be almost abelian, we can also assume that
$$\holom{\alpha_X}{X}{A:=\Alb(X)}$$
is a fibration which induces an isomorphism at the level of fundamental groups. Let us consider now $F$ a smooth fibre of $\alpha_X$ and denote by $f$ its homology class. Let us introduce $\mathcal{Z}_F(X)$ the unique irreducible component of $\chow{F\times X}$ which contains the graph of the embeddings $j:F\hookrightarrow X$ whose homology class is fixed: $j_*[F]=f$ in $\mathrm{H}_*(X,\Z)$. The homology class being fixed, there is a natural map:
$$\holom{\alpha_*}{\mathcal{Z}_F(X)}{A}$$
(the complex space $\mathcal{Z}_F(X)$ should be thought as the set of fibres of $\alpha_X$ which are isomorphic to $F$). Our aim is to apply Theorem \ref{surjectivity} to show that $\alpha_*$ is surjective; we have to prove some topological finiteness of the fibre product:
$$\widetilde{\mathcal{Z}_F(X)}\longrightarrow\upA_{univ}.$$

Since $\alpha_X$ induces an isomorphism on the $\pi_1$, the induced map $\upX_{\uni} \rightarrow \upA_{\uni}$ 
between the universal covers is proper. We can then choose $\upF$ any lifting of $F$ (it is a compact submanifold of $\upX_{univ}\subset\barX$) and perform the same construction on $\barX$: we denote by $\mathcal{Z}_{\upF}(\barX)$ the complex space of embeddings of $\upF$ into $\barX$ whose homology class is given by $[\upF]\in\mathrm{H}_*(\barX,\Z)$. It is easily checked that there is a natural inclusion:
$$\widetilde{\mathcal{Z}_F(X)}\hookrightarrow\mathcal{Z}_{\upF}(\barX)$$
which realises $\widetilde{\mathcal{Z}_F(X)}$ as a Zariski open subset of $\mathcal{Z}_{\upF}(\barX)$. The compactification $\barX$ being K\"ahler, the irreducible components of its cycle space are compact, furthermore there are only finitely many components since the homology class is fixed. Since $\widetilde{\mathcal{Z}_F(X)}$ is a Zariski open set in a compact complex space, it has the homotopy type of a finite $CW$ complex. Thus  $\holom{\alpha_*}{\mathcal{Z}_F(X)}{A}$ is surjective by Theorem \ref{surjectivity}. 
If the Albanese map $\alpha_X$ is equidimensional, this already shows that $\alpha_X$ is locally trivial: every $\varphi$-fibre contains the image of an embedding $F \hookrightarrow X$, since the cohomology class is fixed the manifold $F$ is the whole fibre.

We will now prove by contradiction that $\alpha_X$ is equidimensional, and denote by $\emptyset \neq \Delta \subset A$ the locus where this is not the case.
Set $Z:=\fibre{\alpha_X}{\Delta}$ and consider the map $\holom{f:=\alpha_X|_Z}{Z}{A}$. The map $f$ is not surjective since
$\alpha_X$ is generically smooth. We claim that the induced cover $\upZ := Z \times_A \upA_{\uni}$ is Zariski open in a compact complex space,
which as before leads to a contradiction to Theorem \ref{surjectivity}.

{\em Proof of the claim.} Let $\overline{\univ}$ be the universal family over the unique component $\overline{\hilb}$ of $\chow{\barX}$ such that 
the general point corresponds to a general fibre of the fibration $\upX_{\uni} \rightarrow \upA_{\uni}$. There is a natural bimeromorphic
map $\overline{p}: \Gamma \rightarrow \barX$ and $\upZ \subset \upX_{\uni}$ corresponds to the points $x \in \upX$
such that $\fibre{\overline{p}}{x}$ has positive dimension. By Lemma \ref{lemmacompactifiable} this set is compactifiable.
\end{proof}

\begin{remarks}\label{final rem}
The arguments above are inspired by the proof of \cite[Thm.20]{KP12} and they rely heavily on the K\"ahler assumption on the compactification $\barX$:
in general the irreducible components of $\chow{\barX}$ are not compact if $\barX$ is merely a compact complex manifold.

However, Theorem \ref{surjectivity} and \cite[Thm.16]{KP12} are valid in the category of compact analytic spaces. This leads us to raise the following question: is there an equivalent statement of \cite[Thm.20]{KP12} in the compact complex category ? More precisely let us consider $X \rightarrow A$ a morphism between compact complex spaces and let us assume that $\upA_{univ}$ is contractible. If the induced cover $\upX$ is a Zariski open set in a compact complex manifold $\barX$, is $X \rightarrow A$ a fibre bundle ? We do not know any example where this is not the case.
\end{remarks}

%\bibliographystyle{alpha}
%\bibliography{biblio}
\vspace*{0.3cm}
\noindent{\footnotesize\textsc{Beno\^it Claudon, Universit\'e de Lorraine, IECN, UMR 7502, B.P. 70239, F-54506 Vandoeuvre-l\`es-Nancy Cedex, France}\\
\emph{E-mail address:} \texttt{Benoit.Claudon@iecn.u-nancy.fr}}\\

%\vspace*{0.3cm}
\noindent{\footnotesize\textsc{Andreas H\"oring, Universit\'e Pierre et Marie Curie,
Institut de Math\'ematiques de Jussieu, \'Equipe Topologie et g\'eom\'etrie alg\'ebriques,
4 place Jussieu, 75252 Paris cedex 5, France}\\
\emph{E-mail address:} \texttt{hoering@math.jussieu.fr}}

\newpage

\appendix

\section{Abelianity, Iitaka and {\bf S} conjectures}

\begin{center}
\emph{by} Fr{\'e}d{\'e}ric \textsc{Campana}
\end{center}

\begin{abstract} 
In this appendix, we observe that Iitaka's conjecture fits in the more general
context of special manifolds, in which the relevant statements follow from
the particular cases of projective and simple manifolds.
\end{abstract}

Recall from \cite{C04} (for which we refer to the notions involved in the following):

\begin{conjecture}{\rm(Abelianity Conjecture, \cite[Conj.7.11]{C04})} Let $X$ be a special manifold. Then $\pi_1(X)$ is almost abelian.
\end{conjecture}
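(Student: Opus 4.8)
The plan is to attack the conjecture through Campana's structure theory of special manifolds, reducing it to the building blocks from which such manifolds are assembled and then controlling how $\pi_1$ propagates through the intermediate fibrations. Recall that $X$ is special exactly when its core fibration $c_X:X\dashrightarrow C(X)$ is trivial, and that special manifolds are conjecturally precisely those admitting a tower of fibrations whose general fibres are either rationally connected or of Kodaira dimension zero. These two ends of the tower are transparent at the level of fundamental groups: a rationally connected fibre is simply connected and contributes nothing, while a fibre $G$ with $\kappa(G)=0$ should have almost abelian $\pi_1(G)$ by a Beauville--Bogomolov type decomposition. The first task is therefore to isolate these fibres, most naturally via the Albanese map $\alpha_X:X\to\Alb(X)$, which for a special manifold is surjective with connected special fibres $F$ and induces a surjection $\pi_1(X)\twoheadrightarrow\pi_1(\Alb(X))\cong\Z^{2q(X)}$ whose kernel is controlled, up to finite index, by the image of $\pi_1(F)$.

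Next I would set up the induction on dimension. Running the Albanese tower reduces matters to special fibres with $q=0$, and combining this with the $\Gamma$-reduction $\gamma:X\dashrightarrow\Gamma(X)$ (cf. Definition \ref{definitiongammareduction}) further reduces to the case where $\pi_1(X)$ is generically large, since over the base $\Gamma(X)$ the fibres have finite image in $\pi_1(X)$. At this point it is natural to separate the algebraic and transcendental geometry by passing to the algebraic reduction: one is then left with a projective special manifold, where the minimal model program, the Iitaka fibration, and the theory of the orbifold base are available, and with a simple special manifold, which carries no positive-dimensional compact subvarieties and is thus expected to be dominated by, hence almost equal to, its Albanese torus. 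In the projective case the known decomposition of special projective varieties into rationally connected and $\kappa=0$ pieces, together with abundance, should give the conclusion; in the simple case the near-total absence of geometry should force $\pi_1(X)$ to coincide, up to finite index, with $\pi_1(\Alb(X))$.

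The decisive obstacle is twofold. First, the structural decomposition into rationally connected and $\kappa=0$ fibres is itself conjectural, resting on $C_{n,m}$-type statements and on abundance, so unconditionally the building blocks are not even in place; and already the base case $\kappa(G)=0$ is as hard as the Beauville--Bogomolov decomposition in the K\"ahler setting. Second --- and this is the sharper point --- the class of almost abelian groups is \emph{not} closed under extensions: even granting that each fibre and each base in the tower has almost abelian $\pi_1$, the exact sequence $1\to\pi_1(F)\to\pi_1(X)\to\pi_1(B)\to1$ may only yield an almost solvable group. Defeating this requires genuine geometric input to split or rigidify the extension, typically Hodge-theoretic control of the monodromy of the fibration together with the observation that the Albanese torus already absorbs the entire abelian part of $\pi_1(X)$. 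Making this precise in arbitrary dimension is exactly where the conjecture stays open; accordingly the present appendix does not prove it, but instead reduces the companion conjectures to the two extremal cases, projective and simple, isolated above.
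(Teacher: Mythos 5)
The statement you were asked to prove is recorded in the paper as a \emph{conjecture}: the appendix offers no proof of it, only the remarks that it holds for linear quotients of $\pi_1$ and in dimension at most three, and then proves the weaker Theorem~\ref{reduction conj S}, which reduces Conjecture {\bf S} (not the Abelianity Conjecture) to the primitively special, i.e.\ projective and simple, cases. Your text is accordingly not a proof but a research programme, and to your credit you say so explicitly in the final paragraph. Your diagnosis of why the programme does not close is essentially the correct one and matches the state of the art: the decomposition of a special manifold into rationally connected and $\kappa=0$ pieces is itself conjectural (resting on $C_{n,m}$ and abundance), the base case $\kappa=0$ is as hard as a K\"ahler Beauville--Bogomolov statement, and --- the sharper point --- virtual abelianness is not preserved under group extensions, so even a complete tower of fibrations with almost abelian fibres and base would a priori only give a virtually solvable $\pi_1(X)$.

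For comparison with what the appendix actually does: Campana's reduction argument does not run the Albanese tower you propose, but instead takes a minimal covering family $Z_t$ of special subvarieties of intermediate dimension, passes to the quotient $\varphi:X\dasharrow Y$ by the equivalence relation they generate, and invokes the abelian connectedness results of \cite{C98} to propagate almost abelianness of $\pi_1(Z_t)$ to $\pi_1(X)$ when $\dim Y=0$; this is precisely the geometric input that substitutes for the naive (and, as you note, insufficient) extension argument $1\to\pi_1(F)\to\pi_1(X)\to\pi_1(B)\to1$. When $\dim Y>0$ he uses Nakayama's result to split off a torus fibration rather than the Albanese map directly. So your instinct to isolate the projective and simple cases as the irreducible difficulties agrees with the paper's conclusion, but the mechanism for handling the intermediate fibrations in the paper is \cite{C98}, not monodromy or Hodge theory, and what is obtained at the end is a reduction statement for Conjecture {\bf S}, not a proof of the Abelianity Conjecture. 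Had you presented the outline as a proof, the fatal gap would be exactly the two points you name; since you present it as an open problem with an identified obstruction, your assessment is accurate.
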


\begin{remark} 1. This conjecture is, at least, true for the linear
representations of the fundamental group, which have almost abelian image
\cite[th.7.8]{C04}.\\
2. The conjecture is also true up to dimension three \cite{CC}.
\end{remark}

Recall from \cite{C94} that for any compact K\" ahler manifold there exists a unique connected surjective almost holomorphic map: $\gamma_X:X\dasharrow \Gamma(X)$ such that its fibre $X_a$ through the general point $a\in X$ is the largest subspace $Y$ of $X$ through $a$ such that the image of $\pi_1(\hat Y)$ in $\pi_1(X)$ is finite, $\hat Y$ being the normalisation of $Y$. Then $\gamma d(X):=\dim(\Gamma(X))$ is called the $\gamma$-dimension of $X$. Thus $\gamma d(X)=0$ if and only if $\pi_1(X)$ is finite. When $\gamma d(X)=\dim(X)$, we say (as in \cite{CZ}) that $X$ is of $\pi_1$-general type.

\begin{conjecture}{\rm(Conjecture {\bf S})} Let $X$ be a special manifold with
$\gamma d(X)=\dim(X)$. Then some finite \'etale cover of $X$ is
bimeromorphic to a complex torus.
\end{conjecture}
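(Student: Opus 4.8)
The plan is to prove Conjecture \textbf{S} by reduction to its two extreme cases --- projective and simple manifolds --- which I treat as the available inputs, and then to recombine these by induction on $\dim X$. Suppose $X$ is special with $\gamma d(X)=\dim X$ and is neither projective nor simple. Since $X$ is not projective we have $0<a(X)<\dim X$, and I would use the algebraic reduction $a_X:X\dashrightarrow B$, rendered holomorphic after a bimeromorphic modification, with $B$ projective of dimension $a(X)$ and general fibre $F$ satisfying $a(F)=0$ and $\dim F=\dim X-a(X)$. If instead $a(X)=0$ but $X$ is not simple, then $X$ is covered by positive-dimensional compact subspaces and, by Campana's cycle-space results \cite{Cam80,Cam88}, a maximal covering family defines a nontrivial almost holomorphic reduction map $X\dashrightarrow W$ with $0<\dim W<\dim X$ (this map is genuinely only meromorphic, as emphasised in the introduction). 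In either case I obtain a nontrivial fibration with base and general fibre of strictly smaller dimension.

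The decisive technical point is that both hypotheses descend through such a fibration $X\dashrightarrow S$ with general fibre $G$. Specialness is stable under fibrations in Campana's orbifold theory, so both the orbifold base $S$ and the general fibre $G$ are again special \cite{Cam04}; and the maximality $\gamma d(X)=\dim X$, together with the inequality $\gamma d(X)\le \gamma d(G)+\gamma d(S)\le \dim G+\dim S=\dim X$ for the $\gamma$-dimension along the $\gamma$-reduction \cite{Cam94}, forces $\gamma d(G)=\dim G$ and $\gamma d(S)=\dim S$. Hence $S$ and $G$ are special of $\pi_1$-general type in strictly lower dimension, so by the inductive hypothesis --- or, when the base is the projective algebraic reduction, directly by the assumed projective case --- each is, after a finite \'etale cover, bimeromorphic to a complex torus. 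The induction bottoms out precisely on projective and on simple manifolds, which are the assumed cases.

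It remains to recombine: we have realised $X$, up to finite \'etale cover and bimeromorphic modification, as a fibration whose base and general fibre are tori, and we must deduce that $X$ itself is a torus in the same sense. I would do this through the Albanese map $\alpha_X:X\to\Alb(X)$, using additivity of the irregularity along almost locally trivial fibrations (as in the main text) to control $\Alb(X)\to\Alb(S)$ and its fibres; the $\pi_1$-general type hypothesis should then make $\alpha_X$ generically finite, while specialness rules out a proper image, so that $\alpha_X$ becomes bimeromorphic. I expect the genuine obstacle to lie exactly here, in two linked points: first, the precise additivity of specialness and of the $\gamma$-dimension along the (only almost holomorphic) reduction maps, where the orbifold multiplicities of \cite{Cam04} must be tracked; and second, showing that the extension $1\to\pi_1(G)\to\pi_1(X)\to\pi_1(S)\to1$ of lattices is again almost abelian rather than, say, Heisenberg-type --- this is where the K\"ahler hypothesis is indispensable, since it forbids nilpotent non-abelian fundamental groups, and it is what ultimately identifies the recombined $X$ with its Albanese torus.
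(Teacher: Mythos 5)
Your statement is of course a conjecture; what the paper actually proves is Theorem \ref{reduction conj S}, the reduction of Conjecture {\bf S} to the primitively special (hence projective and simple) cases, and your proposal is best read as an attempt at that reduction. Your overall strategy --- induct on dimension, split off a nontrivial reduction map, apply the hypothesis to base and fibre, recombine --- matches Campana's, but two steps have genuine gaps. The first concerns descent of specialness to the \emph{fibre}. When $a(X)=0$ and $X$ is not simple you take the quotient $X\dashrightarrow W$ by a maximal covering family and assert that the general fibre is again special because ``specialness is stable under fibrations''. That is not available: a special manifold with $a(X)=0$ may a priori be covered by a family of subvarieties of general type, and then neither the members of the family nor the fibres of the associated quotient are special for free. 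The appendix handles exactly this point in the Lemma characterising primitively special manifolds: if the generic member of a minimal covering family is of general type, the fibres of the quotient are projective \cite{C81} and almost homogeneous \cite{C85}, hence special; and in the Theorem the induction is run not on an arbitrary covering family but on a covering family of \emph{special} subvarieties of intermediate dimension, whose existence is precisely the negation of ``primitively special''. Without that intermediate notion and that Lemma, your case $a(X)=0$ does not close.

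The second gap is the recombination. You correctly isolate the real difficulty --- showing that the extension $1\to\pi_1(G)\to\pi_1(X)\to\pi_1(S)\to 1$ is almost abelian rather than nilpotent --- but the fact you propose to use, that a K\"ahler group cannot be nilpotent non-abelian, is false (there exist non-abelian nilpotent K\"ahler groups). The paper replaces this with two specific inputs: Campana's abelian connectedness theorem \cite{C98}, which gives almost abelianity of $\pi_1(X)$ when $X$ is connected by chains of subvarieties with almost abelian fundamental groups over a base with almost abelian $\pi_1$, and Nakayama's theorem \cite{Nak}, which upgrades a fibration whose generic fibres are bimeromorphic to tori into a smooth torus submersion after a finite \'etale cover. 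Once $\pi_1(X)$ is known to be almost abelian, bimeromorphy to the Albanese torus follows from specialness and $\gamma d(X)=\dim X$ exactly as in Remark \ref{abgdmax}; your Albanese sketch is essentially this last step, but it only works after the group-theoretic input is secured. Finally, the subadditivity $\gamma d(X)\le\gamma d(G)+\gamma d(S)$ you invoke is neither needed nor obviously in \cite{Cam94}; the paper only uses the elementary fact that fibres through general points of a manifold of maximal $\gamma$-dimension again have maximal $\gamma$-dimension.
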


\begin{remarks}\label{abgdmax} 1. Conversely, if some finite \'etale cover
of $X$ is bimeromorphic to a complex torus, $X$ is special with $\gamma
d(X)=\dim(X)$.

2. This conjecture {\bf S} implies the conjecture of Iitaka which claims the same conclusion as in S assuming that the universal cover of $X$ is $\C^n$.  The latter hypothesis
is indeed weaker (by the orbifold version of the theorem of Kobayashi-Ochiai
\cite[th.7.11 and 8.11]{C04}).

3. The Abelianity conjecture implies the conjecture {\bf S} (and so the
conjecture of Iitaka). Indeed, if $X$ is special of maximal
$\gamma$-dimension, its fundamental group is torsionfree and abelian (by
going to a suitable finite \'etale cover), if one assumes the Abelianity
conjecture. Its Albanese map is then surjective with connected fibres (by
\cite[th.5.3]{C04}) and induces an isomorphism on the first homology
groups. It has thus to be bimeromorphic, by the maximality of $\gamma
d(X)$.
\end{remarks}

\begin{definition} We shall say that $X$ is \emph{primitively special} if it is
special but not covered by special submanifolds of intermediate dimension
$0<d<\dim(X)$.
\end{definition}

\begin{lemma} $X$ is primitively special if and only if either:
\begin{enumerate}
\item $X$ is a rational or elliptic curve, or
\item $X$ is projective with $K_X$ pseudo-effective and with $\kappa(X)$ either $0$, or $-\infty$ (according to the abundance conjecture, this last case does not
exist), or
\item $X$ is simple (\emph{i.e.} not a curve and not covered by compact subspaces of intermediate dimensions).
\end{enumerate}
\end{lemma}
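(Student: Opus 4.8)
The plan is to prove both implications through a single mechanism. For a special manifold $X$ with $\dim X=n\ge 2$, the general fibre of any almost holomorphic fibration is again special (the basic stability of specialness under the core machinery of \cite{Cam04}); hence $X$ is primitively special precisely when it carries \emph{no} nontrivial almost holomorphic fibration $g:X\dashrightarrow Y$ with $0<\dim Y<n$. Indeed the fibres of such a $g$ are special submanifolds of intermediate dimension sweeping out $X$, and conversely a covering family of special submanifolds yields, through its meromorphic quotient, such a fibration. The case $n=1$ is immediate: there is no intermediate dimension, and a special curve is exactly a rational or elliptic curve \cite{Cam04}, giving case (1). So from now on I would assume $n\ge 2$ and $X$ primitively special, and exploit this ``no intermediate fibration'' property by feeding into it the canonical almost holomorphic fibrations attached to $X$.

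First I would run the algebraic reduction $a_X:X\dashrightarrow A(X)$, which is almost holomorphic, so its base has dimension $a(X)\in\{0,n\}$. If $a(X)=n$ then $X$ is Moishezon, hence projective since it is K\"ahler. If instead $a(X)=0$, then every compact subvariety $Z\subset X$ has $a(Z)=0$; such a $Z$ is not Moishezon, hence not of general type, and it admits no fibration onto a positive dimensional orbifold of general type (this would force $a(Z)>0$), so its desingularisation has trivial core and $Z$ is special \cite{Cam04}. Thus if $X$ were covered by positive dimensional proper compact subspaces, a minimal covering family would consist of special submanifolds of intermediate dimension, contradicting primitivity; so $a(X)=0$ forces $X$ to be \emph{simple}, which is case (3). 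This dichotomy is exactly ``projective versus simple''.

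It remains to treat the projective case. Feeding the Iitaka fibration into the no intermediate fibration property, its base has dimension $\kappa(X)\in\{0,n\}$ whenever $\kappa(X)\ge 1$; but $\kappa(X)=n$ is excluded since a manifold of general type is never special \cite{Cam04}, so $\kappa(X)\in\{0,-\infty\}$. Feeding in the MRC (rational quotient) fibration, whose fibres are rationally connected, hence special, its base again has dimension $0$ or $n$: if it is a point then $X$ is rationally connected and covered by rational curves, which are special of intermediate dimension (as $n\ge 2$), contradicting primitivity; so the base is $n$-dimensional, i.e. $X$ is not uniruled. By the characterisation of non-uniruled projective manifolds as those with pseudo-effective canonical bundle (Boucksom--Demailly--P\u{a}un--Peternell), $K_X$ is then pseudo-effective. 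Together with $\kappa(X)\in\{0,-\infty\}$ this is case (2); the three cases are disjoint, since a projective manifold of dimension $\ge 2$ is covered by curves and so is never simple.

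For the converse, cases (1) and (3) are direct: a rational or elliptic curve is special with no intermediate dimension, while a simple manifold is non-Moishezon (Moishezon manifolds of dimension $\ge 2$ are covered by curves), hence not of general type, admits no positive dimensional general type fibration, so has trivial core and is special, and is by definition not covered by compact subspaces of intermediate dimension. The delicate point, which I expect to be the main obstacle, is the primitivity of the manifolds in case (2): one must show that a projective $X$ with $\kappa(X)\le 0$ and $K_X$ pseudo-effective is not covered by special submanifolds of intermediate dimension. This does not follow from the fibration bookkeeping above and forces one to use the precise, orbifold-theoretic meaning of ``covered by special submanifolds'' together with the structure theory of special manifolds in \cite{Cam04}; concretely, one has to understand when such an $X$ decomposes through a special fibration, and it is the control of the (special) orbifold base, rather than of the fibres, that is the genuinely non-formal ingredient of the whole argument.
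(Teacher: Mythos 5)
Your treatment of the projective case and of the case $0<a(X)<\dim X$ agrees with the paper's (Iitaka fibration forces $\kappa(X)\in\{0,-\infty\}$, uniruledness is excluded by primitivity, BDPP gives pseudo-effectivity of $K_X$; the algebraic reduction has special fibres by \cite[Thm.2.39]{Cam04}). The genuine gap is in your case $a(X)=0$. The assertion that $a(X)=0$ forces $a(Z)=0$ for \emph{every} compact subvariety $Z\subset X$ is false: algebraic dimension does not restrict to subvarieties (a K3 surface with $a=0$ contains projective curves; more generally a manifold with $a(X)=0$ can contain Moishezon, even general type, subvarieties). Consequently you cannot conclude that a minimal covering family automatically consists of special members, and the possibility that $X$ is swept out by a family of \emph{general type} subvarieties is not excluded by your argument. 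This is precisely the content of the paper's third case: if the minimal covering family $Z_t$ is not special it must be of general type, and one then forms the quotient $\varphi:X\dasharrow Y$ by the equivalence relation generated by the $Z_t$'s \cite{C81}; its fibres are projective, $a(Y)=0$ and $\dim Y>0$, so by \cite{C85} the fibres are almost homogeneous, hence special --- the contradiction. None of this appears in your proposal, and it is the one step of the forward implication that does not reduce to bookkeeping with canonical fibrations.

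Two smaller points. Your opening equivalence (``primitively special iff no nontrivial almost holomorphic fibration onto an intermediate-dimensional base'') is false in the direction you label ``conversely'': the quotient of a covering family can be a point ($\PP^n$ is covered by lines, hence not primitively special, yet carries no nontrivial almost holomorphic fibration). You only invoke the safe direction later, so this does not damage the argument, but the statement should not be made as an equivalence. Finally, you locate the ``main obstacle'' in the converse for case (2); note that the paper proves only the implication ``primitively special $\Rightarrow$ (1), (2) or (3)'' and does not address the converse at all, whereas the real analytic work in the proof it does give lies exactly in the simple case you have short-circuited.
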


\begin{proof}
We assume that $X$ is primitively special with $n\geq 2$. We distinguish 3 cases.

Let us first assume $X$ to be projective. Then $\kappa(X)<n$, since $X$ is special. If
$0\leq\kappa(X)<n$, $X$ is covered by submanifolds having $\kappa=0$ and
intermediate dimension $n-\kappa(X)$ if $\kappa(X)>0$, which is impossible if $X$ is primitively special. Thus $\kappa(X)=0$ and $K_X$ is pseudo-effective. If $K_X$ is not pseudo-effective, then $X$ is uniruled, by \cite{BDPP} and \cite{MiMo}. The only remaining case is thus when $K_X$ is pseudo-effective and $\kappa(X)=-\infty$ (which Abundance conjecture claims not to exist). The projective case
is thus established.

Assume now that $0<a(X)<n$. Because the fibres of the algebraic
reduction are special, by \cite[th.2.39]{C04}, $X$ is not primitively
special.

Assume finally that $a(X)=0$, and that $X$ is not simple, but primitively
special. Let $Z_t$ be a covering family of $X$ by an analytic family of
subspaces which are generically irreducible and of intermediate
dimension $0<d<n$, chosen to be minimal. The generic member of this family is thus either of
general type, or special and primitively special. The second possibility
is excluded, since $X$ is primitively special. Thus $Z_t$ is of
general type. Let then $\varphi:X\dasharrow Y$ be the quotient by the
equivalence relation generated by the $Z_t's$ \cite{C81}. Its fibres
are projective, by \cite{C81}. Since $a(X)=0$, we have: $a(Y)=0$, and
$\dim(Y)>0$. From \cite{C85}, we get that the fibres of $\varphi$ are
almost-homogeneous, hence special. Contradiction since $X$ was assumed to
be primitively special. Thus $X$ is simple.
\end{proof}

\begin{remark}
The above argument is partially inspired by \cite{HPR}.
\end{remark} 

\begin{theorem}\label{reduction conj S}
The conjecture {\bf S} (and so the conjecture of Iitaka) is true if
{\bf S} is true whenever $X$ is primitively special. In particular, the conjecture {\bf S} is true if it is true in the projective and simple cases. 
\end{theorem}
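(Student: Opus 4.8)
The plan is to argue by induction on $n=\dim X$, after first recasting Conjecture {\bf S} as a statement about fundamental groups. By the argument of Remark \ref{abgdmax}.3, a special manifold of maximal $\gamma$-dimension whose fundamental group is almost abelian already satisfies the conclusion of {\bf S} (its Albanese map becomes bimeromorphic by maximality of $\gamma d$); conversely a finite \'etale cover bimeromorphic to a torus has almost abelian $\pi_1$. Hence for special $X$ with $\gamma d(X)=\dim X$ the conjecture {\bf S} is \emph{equivalent} to the almost abelianity of $\pi_1(X)$, so it suffices to prove: if $\pi_1$ is almost abelian for every primitively special manifold of maximal $\gamma$-dimension, then it is almost abelian for every special $X$ with $\gamma d(X)=\dim X$. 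I carry this out by induction on $n$, the cases $n\le 1$ being clear since a special curve of maximal $\gamma$-dimension is elliptic, hence a torus.

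If $X$ is primitively special this is exactly the hypothesis. If not, then by definition $X$ is covered by special submanifolds of intermediate dimension $0<d<n$; choosing a covering family of minimal such dimension and passing to the associated quotient fibration \cite{Cam04} produces a nontrivial almost holomorphic map $\varphi:X\dashrightarrow Y$ with $0<\dim Y<n$ and positive-dimensional general fibre $F$. The general fibre $F$ is special, and since the orbifold base of a fibration defined on a special manifold is again special \cite{Cam04}, the orbifold base $(Y,\Delta_\varphi)$ is special as well. Moreover the $\gamma$-dimension is subadditive along fibrations \cite{Cam94,Kol93}, so
\[
n=\gamma d(X)\le \gamma d(F)+\gamma d(Y,\Delta_\varphi)\le \dim F+\dim Y=n ,
\]
which forces $\gamma d(F)=\dim F$ and $\gamma d(Y,\Delta_\varphi)=\dim Y$: both the fibre and the orbifold base are special of maximal $\gamma$-dimension and of dimension strictly less than $n$.

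By the inductive hypothesis (run in Campana's orbifold category) the groups $\pi_1(F)$ and $\pi_1^{\mathrm{orb}}(Y,\Delta_\varphi)$ are almost abelian. The homotopy exact sequence attached to $\varphi$,
\[
\pi_1(F)\longrightarrow \pi_1(X)\longrightarrow \pi_1^{\mathrm{orb}}(Y,\Delta_\varphi)\longrightarrow 1,
\]
then exhibits $\pi_1(X)$ as a virtually solvable group. Since $\pi_1(X)$ is the fundamental group of a compact K\"ahler manifold, a virtually solvable such group is almost abelian; hence $\pi_1(X)$ is almost abelian and, by the reduction of the first paragraph, $X$ satisfies {\bf S}. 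The final assertion is now immediate: by the Lemma a primitively special manifold is a rational or elliptic curve (where {\bf S} is trivial), a projective manifold, or a simple manifold, so {\bf S} for primitively special manifolds follows from its validity in the projective and simple cases.

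The main obstacle is the last paragraph. First there is the bookkeeping needed to carry the induction through the orbifold category: one needs a workable orbifold $\pi_1$ making the homotopy sequence exact, the orbifold notion of specialness, and subadditivity of $\gamma d$ taking into account the multiple fibres of $\varphi$. Second, and more essentially, one must upgrade ``virtually solvable'' to ``almost abelian''; this is precisely where the K\"ahler hypothesis enters and where one invokes the structure theory of solvable K\"ahler groups. A secondary point to verify is that the quotient fibration $\varphi$ is genuinely nontrivial for a minimal covering family, i.e. that $X$ is not a single equivalence class, so that both $F$ and $(Y,\Delta_\varphi)$ really have dimension strictly smaller than $n$ and the inductive hypothesis applies.
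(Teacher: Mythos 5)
Your overall architecture --- reducing {\bf S} to the almost abelianity of $\pi_1(X)$ via Remark \ref{abgdmax}, inducting on the dimension, and analysing the quotient $\varphi:X\dasharrow Y$ of the equivalence relation generated by a covering family of special subvarieties --- coincides with the paper's. But the two steps you yourself flag as ``obstacles'' are genuine gaps, and the paper closes them by entirely different means. First, you cannot assume $0<\dim Y$: the quotient of the equivalence relation generated by a covering family can perfectly well be a point (the $Z_t$'s may connect any two general points of $X$ by chains), and this is not a degenerate possibility to be excluded but one of the two cases of the paper's proof. There it is handled by Campana's \emph{connexit\'e ab\'elienne} theorem \cite{C98}: a compact K\"ahler manifold connected by chains of subvarieties whose fundamental groups have almost abelian image has almost abelian fundamental group. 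Your argument says nothing in this case.

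Second, your route through the orbifold homotopy exact sequence only shows that $\pi_1(X)$ is an extension of almost abelian by almost abelian, hence virtually solvable, and the promotion of ``virtually solvable K\"ahler group'' to ``almost abelian'' is not a statement you can simply invoke: the available results (Arapura--Nori, Delzant) yield only ``virtually nilpotent'', and excluding non-abelian nilpotent K\"ahler groups in general (Heisenberg-type lattices) is essentially the open Carlson--Toledo/Campana problem. The paper sidesteps the extension problem altogether: when $\dim Y>0$ it applies the inductive hypothesis to the fibres of $\varphi$ (special, of maximal $\gamma$-dimension and of dimension $<n$), uses Nakayama's structure theorem to replace $X$ by an \'etale cover $X'$ carrying a genuine submersion onto a manifold $V$ with complex tori as fibres, applies the induction to $V$, and concludes with \cite{C98} again. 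This also removes your third difficulty: no orbifold version of {\bf S}, of ``primitively special'', or of subadditivity of the $\gamma$-dimension is ever needed, so the induction stays in the category of manifolds. As written, your proof is therefore incomplete precisely at the points you identify as its main obstacles; only your opening reduction (equivalence of {\bf S} with almost abelianity of $\pi_1$) and your closing deduction of the projective/simple dichotomy from the Lemma match complete arguments in the paper.
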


\begin{proof}
Assume thus that $X$ is special, with $\gamma d(X)=n$. If $X$
is primitively special, we assume that {\bf S} is true. So assume that $X$ is
not primitively special and let $Z_t$ be a covering family of subspaces
which are special of intermediate dimension $0<d<n$. We may assume that
the generic member $Z_t$ is smooth, after suitable blow-ups of $X$. We may
thus assume that the conjecture holds true for the generic $Z_t$, by
working inductively on $n$. Thus the fundamental group of the generic
$Z_t$ is, in particular, almost abelian.

Let again $\varphi:X\dasharrow Y$ be the quotient by the equivalence relation
generated by the $Z_t's$. Its fibres are special \cite[th.3.3]{C04} (since they are connected by chains of special subspaces) and have maximal $\gamma$-dimension, since this is the case for $X$.\\

\noindent There are 2 cases, according to $m:=dim(Y)$.
\begin{enumerate}[(i)]
\item $m=0$. In this case, the fundamental group of $X$ is almost abelian, by
\cite{C98}, since $X$ is generated by connected chains of $Z_t's$, which
have almost abelian fundamental groups. Since $X$ has maximal $\gamma$-dimension, the
conjecture {\bf S} holds for $X$, by remark \ref{abgdmax} above.
\item $n>m>0$. In this case the generic fibres of $\phi$ have an \'etale cover
bimeromorphic to a torus. From \cite{Nak}, we conclude that $X$ has an
\'etale cover bimeromorphic to some $X'$ having a submersion $\psi:X'\to V$ on
a manifold $V$ of maximal $\gamma$-dimension, with fibres complex tori.
Because $V$ is special, since so is $X'$ \cite[th.5.12]{C04}, the conjecture {\bf S} is true for $V$, so that its fundamental group is almost abelian. From \cite{C98} again, we
deduce that the fundamental group of $X$ is almost abelian and that the
conjecture {\bf S} holds true for $X$, as claimed.
\end{enumerate}
\end{proof}

\vspace*{0.3cm}
\noindent{\footnotesize\textsc{Fr\'ed\'eric Campana, Universit\'e de Lorraine, IECN, UMR 7502, B.P. 70239, F-54506 Vandoeuvre-l\`es-Nancy Cedex, France}\\
\emph{E-mail address:} \texttt{Frederic.Campana@iecn.u-nancy.fr}}

\end{document}